\numberwithin{figure}{section}
\theoremstyle{plain}
\newtheorem{thm}{\protect\theoremname}
  \theoremstyle{plain}
  \newtheorem{lem}[thm]{\protect\lemmaname}
  \newtheorem{cor}[thm]{\protect\corollaryname}
  \theoremstyle{remark}
  \newtheorem{rem}[thm]{\protect\remarkname}
  \theoremstyle{plain}
  \newtheorem{prop}[thm]{\protect\propositionname}
  \theoremstyle{definition}
  \newtheorem{defn}[thm]{\protect\definitionname}
   \providecommand{\fg}{\ifdim\lastskip>\z@\unskip\fi~\frqq}%
  \providecommand{\definitionname}{Definition}
  \providecommand{\lemmaname}{Lemma}
  \providecommand{\propositionname}{Proposition}
  \providecommand{\remarkname}{Remark}
\providecommand{\theoremname}{Theorem}
\providecommand{\corollaryname}{Corollary}
\renewcommand{\bar}[1]{{\overline{#1}}}
\newcommand\kk{{\mathcal k}}
\newcommand\CHAR{\operatorname{char}}
\newcommand\Spec{\operatorname{Spec}}
\newcommand{\T}[1]{\rule{0pt}{#1 ex}}
\begin{document}

\selectlanguage{english}

\title[Irrationality of threefolds via Weil's conjectures]{Irrationality of generic cubic threefold \\ via Weil's conjectures}

\addtolength{\textwidth}{0mm}
\addtolength{\hoffset}{-0mm} 
\addtolength{\textheight}{0mm}
\addtolength{\voffset}{-0mm} 


\global\long\global\long\def\Alb{{\rm Alb}}
 \global\long\global\long\def\Jac{{\rm Jac}}
\global\long\global\long\def\Disc{{\rm Disc}}

\global\long\global\long\def\Tr{{\rm Tr}}
 \global\long\global\long\def\NS{{\rm NS}}
\global\long\global\long\def\PicVar{{\rm PicVar}}
\global\long\global\long\def\Pic{{\rm Pic}}
\global\long\global\long\def\Br{{\rm Br}}
 \global\long\global\long\def\Pr{{\rm Pr}}

\global\long\global\long\def\Hom{{\rm Hom}}
 \global\long\global\long\def\End{{\rm End}}
 \global\long\global\long\def\aut{{\rm Aut}}
 \global\long\global\long\def\NS{{\rm NS}}
 \global\long\global\long\def\SSm{{\rm S}}
 \global\long\global\long\def\psl{{\rm PSL}}
 \global\long\global\long\def\CC{\mathbb{C}}
 \global\long\global\long\def\BB{\mathbb{B}}
 \global\long\global\long\def\PP{\mathbb{P}}
 \global\long\global\long\def\QQ{\mathbb{Q}}
 \global\long\global\long\def\RR{\mathbb{R}}
 \global\long\global\long\def\FF{\mathbb{F}}
 \global\long\global\long\def\DD{\mathbb{D}}
 \global\long\global\long\def\LL{\mathbb{L}}
 \global\long\global\long\def\NN{\mathbb{N}}
 \global\long\global\long\def\ZZ{\mathbb{Z}}
 \global\long\global\long\def\HH{\mathbb{H}}
 \global\long\global\long\def\Gal{{\rm Gal}}
 \global\long\global\long\def\OO{\mathcal{O}}
 \global\long\global\long\def\pP{\mathfrak{p}}
 \global\long\global\long\def\pPP{\mathfrak{P}}
 \global\long\global\long\def\qQ{\mathfrak{q}}

\author{Dimitri Markushevich, Xavier Roulleau}
\begin{abstract}
\sloppy
An arithmetic method of proving the irrationality of smooth projective 3-folds is described, using reduction modulo $p$. It is illustrated by an application to a cubic threefold, for which the hypothesis that its intermediate Jacobian is isomorphic to the Jacobian of a curve is contradicted by reducing modulo 3 and counting points over appropriate extensions of $\FF_3$.
As a spin-off, it is shown that the 5-dimensional Prym varieties arising as intermediate Jacobians of certain cubic 3-folds have the maximal number of points over $\FF_q$ which attains \mbox{Perret's} and Weil's upper bounds.
\end{abstract}

\maketitle

\section{Introduction}

In early 70's, Clemens and Griffiths proved that a
smooth complex cubic threefold is non-rational, that is, not birational to $\PP^{3}$. The cubic threefold was thus one of the
first counterexamples to the famous Lüroth problem, obtained almost at the same time as the counterexamples of Iskovskikh--Manin and Artin--Mumford. The main tool in the approach of Clemens--Griffiths is the following criterion:
\begin{thm}[Clemens--Griffiths \protect{\cite[Corollary 3.26]{Clemens}}]
\label{thm:(Clemens-Griffiths,-).}
Let $V_{/\CC}$ be a smooth threefold with $h^{3,0}(V)=0$. Let $J(V)$
be the intermediate Jacobian of $V$. If $V$ is birational to $\PP^{3}$
then $J(V)$ is isomorphic (as a principally polarized abelian variety)
to the Jacobian of a (possibly reducible) non-singular curve.
\end{thm}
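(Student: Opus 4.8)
The plan is to turn the hypothesis $h^{3,0}(V)=0$ into the statement that the intermediate Jacobian $J(V)=H^{3}(V,\CC)/\bigl(F^{2}H^{3}(V)+H^{3}(V,\ZZ)\bigr)$ is a genuine principally polarized abelian variety (ppav): when $h^{3,0}=0$ one has $F^{2}H^{3}=H^{2,1}$, and the cup-product pairing on $H^{3}$ induces a canonical principal polarization on $J(V)$. The whole argument then reduces to tracking how this ppav transforms under the elementary birational surgeries of smooth projective threefolds. Since $V$ is birational to $\PP^{3}$, I would connect the two by a chain of blow-ups and blow-downs along smooth centers---points and smooth curves---as furnished by the weak factorization theorem; Clemens and Griffiths instead carry out the comparison directly, using only resolution of singularities. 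The comparison is anchored by $H^{3}(\PP^{3})=0$, whence $J(\PP^{3})=0$.

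The technical core is the blow-up formula for the intermediate Jacobian. Blowing up a point leaves $H^{3}$, together with its Hodge structure and polarization, unchanged, so $J$ is unaffected; blowing up a smooth curve $C$ introduces a Tate-twisted summand $H^{1}(C)(-1)$ in $H^{3}$, which contributes $\Jac(C)$ as a direct factor of $J$. The point that must be checked with care is that this splitting is orthogonal for the cup-product pairing, so that at the level of ppavs it yields $J(\widetilde{X})\cong J(X)\oplus\Jac(C)$; one also checks that $h^{3,0}$ is preserved at every step, keeping each term a ppav. Accumulating these identities along the two sides of the chain gives an isomorphism of ppavs
\[
J(V)\oplus\bigoplus_{i}\Jac(C_{i})\;\cong\;\bigoplus_{j}\Jac(D_{j}),
\]
where the $C_{i}$ and $D_{j}$ are the smooth curves blown up on the $V$-side and the $\PP^{3}$-side.

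To finish I would invoke the unique-decomposition theorem for principally polarized abelian varieties: every ppav is an orthogonal direct sum of indecomposable ppavs, and these factors are unique up to isomorphism and permutation. The Jacobian of an irreducible smooth curve is indecomposable, and $\Jac$ of a disjoint union of curves is the direct sum of the Jacobians of the components; hence the right-hand side of the displayed isomorphism is an orthogonal sum of indecomposable Jacobians. By uniqueness, every indecomposable factor of $J(V)$ is itself the Jacobian of an irreducible curve, so $J(V)$ is a direct sum of such Jacobians, i.e. the Jacobian of the disjoint union of those curves, which is a (possibly reducible) nonsingular curve.

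I expect the difficulty to be concentrated in the last two paragraphs. The decisive input is the unique-factorization theorem for ppavs, a Krull--Schmidt-type rigidity that holds precisely because of the principal polarization and fails for unpolarized abelian varieties; this is the real heart of the criterion. The second, more bookkeeping, obstacle is to verify throughout that all isomorphisms respect the principal polarizations and not merely the underlying abelian varieties or rational Hodge structures---in particular the orthogonality of the curve summand in the blow-up formula---because it is exactly the polarized conclusion, rather than its unpolarized shadow, that the subsequent reduction-modulo-$p$ computation is designed to contradict.
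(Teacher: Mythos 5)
The paper offers no proof of this statement: it is quoted as a black box from Clemens--Griffiths \cite[Corollary 3.26]{Clemens}, so there is no in-paper argument to compare against. Your sketch is, in outline, the standard proof of that corollary (essentially as streamlined in Beauville's survey \cite{Beau}): $h^{3,0}=0$ makes $J(V)$ a p.p.a.v.\ via the cup-product polarization, the polarized blow-up formula gives $J(\widetilde X)\cong J(X)\oplus\Jac(C)$ with orthogonal summands, $J(\PP^3)=0$, and the decisive input is the unique decomposition of a p.p.a.v.\ into indecomposable polarized factors together with the indecomposability of Jacobians of irreducible curves --- which you correctly identify as the heart. Two remarks. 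First, your appeal to weak factorization is an anachronism and is not needed: Clemens and Griffiths resolve the rational map $\PP^{3}\dashrightarrow V$ into a morphism from an iterated blow-up of $\PP^{3}$ along points and smooth curves, and use that for any birational morphism $f\colon X'\to X$ of smooth projective threefolds the maps $f_{*},f^{*}$ exhibit $J(X)$ as a polarized direct summand of $J(X')$; this one-sided comparison already places $J(V)$ as a direct summand of $\bigoplus_{j}\Jac(D_{j})$, and Krull--Schmidt finishes as in your last paragraph. Second, if you do use the zigzag from weak factorization, your displayed identity does not fall out of naive ``accumulation''; you should either run a short induction along the chain (at each step a Jacobian factor is added to one side or the other) or apply the uniqueness theorem stepwise. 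Neither point is a gap in substance; the argument is correct.
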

Then, the core and the difficult part of \cite{Clemens} is 
the following result:
\begin{thm}[\protect{\cite[Theorem 13.12]{Clemens}}]
\label{thm:-J(X) not jacob} The
intermediate Jacobian of a smooth cubic threefold $X$ is not isomorphic
to the Jacobian of a curve.
\end{thm}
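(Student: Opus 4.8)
The plan is to read off a geometric invariant from the singularities of the theta divisor of $J(X)$ and to show that this invariant is incompatible with $J(X)$ being a Jacobian. Recall that $J(X)$ is a principally polarized abelian variety of dimension $g=h^{2,1}(X)=5$, and let $\Theta\subset J(X)$ be a symmetric theta divisor; then $X$ is a cubic hypersurface in $\PP^{4}$. The heart of the matter is the following geometric fact, which I would establish first: the theta divisor $\Theta$ has a \emph{unique} singular point $p$, this point has multiplicity $3$, and the projectivized tangent cone to $\Theta$ at $p$ is isomorphic, as a hypersurface in $\PP^{4}=\PP(T_{p}J(X))$, to the cubic threefold $X$ itself. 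In particular $\operatorname{Sing}(\Theta)$ is zero-dimensional, and $\Theta$ is irreducible (otherwise two components would meet in codimension $2$, i.e. in dimension $3$, inside $\operatorname{Sing}(\Theta)$).

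\textbf{The key computation.} To prove this I would work with the Fano surface $F$ of lines on $X$, which is smooth, and use that the Abel--Jacobi map identifies $J(X)$ with $\Alb(F)$ and realizes $\Theta$, up to translation, as the image of the difference map $F\times F\to J(X)$, $(\ell,\ell')\mapsto u(\ell)-u(\ell')$. Over the base point the fibre of this map is the diagonal, which is the source of the singularity; differentiating the Abel--Jacobi map and computing the local equation of $\Theta$ there, one finds that its lowest-order term is exactly the cubic form defining $X$, which both identifies the tangent cone with $X$ and shows the multiplicity is $3$. One then checks that $\Theta$ is smooth elsewhere by a local analysis of the difference map along the fibres other than the diagonal. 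I expect this to be the main obstacle: it requires a fine understanding of the incidence geometry of lines on $X$ and a delicate local study of the Abel--Jacobi map, and it is where essentially all of the input specific to cubic threefolds enters.

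\textbf{Comparison with Jacobians.} Granting the above, suppose for contradiction that $J(X)\cong\Jac(C)$ as principally polarized abelian varieties for some non-singular curve $C$ of genus $5$. First, $C$ must be irreducible: a reducible $C$ would make $\Jac(C)$ a nontrivial product of principally polarized abelian varieties, whose theta divisor is reducible, contradicting the irreducibility of $\Theta$. So assume $C$ is irreducible of genus $5$. By Riemann's singularity theorem, after a translation $\Theta=W_{4}(C)$ and $\operatorname{Sing}(\Theta)=W_{4}^{1}(C)$, the locus of degree-$4$ line bundles $L$ with $h^{0}(L)\geq 2$, the multiplicity of $\Theta$ at $[L]$ being $h^{0}(L)$. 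By the existence and dimension statements of Brill--Noether theory, $W_{4}^{1}(C)$ is non-empty and each of its components has dimension at least $\rho=g-2(g-d+1)=5-2\cdot 2=1$. Hence $\operatorname{Sing}(\Theta)$ is at least one-dimensional, contradicting the fact that it is a single point. (One may also contradict the multiplicity directly: a point of $W_{4}(C)$ of multiplicity $3$ corresponds to a degree-$4$ bundle with $h^{0}=3$, which by Clifford's theorem forces $C$ to be hyperelliptic, and then $\dim\operatorname{Sing}(\Theta)=g-3=2$, again ruling out an isolated singularity.) Therefore $J(X)$ is not isomorphic to the Jacobian of any curve.
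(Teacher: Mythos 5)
Your proposal is essentially the classical transcendental proof (Clemens--Griffiths, as completed by Mumford's appendix and Beauville's later work on the singularities of the theta divisor), and it takes a genuinely different route from this paper. The paper does not reprove Theorem~\ref{thm:-J(X) not jacob} at all: it cites \cite{Clemens} for the general statement, and its own contribution is an \emph{arithmetic} proof of the weaker, generic statement --- exhibit one cubic $X_{/\ZZ}$, reduce mod $3$, compute the Weil polynomial of $J(X)_{/\FF_3}$ via point counts on $X$ over $\FF_{3^r}$, check absolute simplicity, and show via Serre's descent result and the inequalities $N_1(C)=q+1+M_1(X)$, $N_2(C)\geq N_1(C)$ that no curve over $\FF_3$ or $\FF_9$ can have $J(X)_{/\FF_3}$ as its Jacobian; one then lifts to $\CC$ by N\'eron models and closes up using the closedness of the Jacobian locus. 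Your route, by contrast, works over $\CC$ for \emph{every} smooth cubic and trades arithmetic for Hodge/Brill--Noether theory: once one knows that $\operatorname{Sing}(\Theta)$ is a single triple point with tangent cone $X$, the comparison with $W^1_4(C)$ (nonempty of dimension $\geq\rho=1$ by Kleiman--Laksov, or the Clifford-theorem argument for a triple point) is correct and clean. What your approach buys is the full strength of the theorem and the beautiful Torelli-type byproduct that $X$ is recovered from $(J(X),\Theta)$; what the paper's approach buys is a short, checkable computation that sidesteps the hard geometry entirely, at the cost of only treating the generic cubic.

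One caveat you should make explicit: your ``key computation'' --- that $\Theta$ has a \emph{unique} singular point, of multiplicity exactly $3$, with projectivized tangent cone $X$ --- is itself a substantial theorem (it is Beauville's result \cite{bea2}, refining Mumford's Prym-theoretic analysis; the original Clemens--Griffiths proof of their Theorem 13.12 actually proceeds differently, via the Gauss map of $\Theta$ and dual varieties). You correctly identify it as the main obstacle, but as written your proof is only an outline until that input is established: in particular the uniqueness of the singular point (smoothness of $\Theta$ away from the origin of the difference map) requires a genuinely delicate study of the incidence geometry of lines on $X$, and the weaker statement ``$\dim\operatorname{Sing}(\Theta)=0$'' already carries almost all of the difficulty. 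Everything downstream of that input in your argument is sound.
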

Since then several alternative proofs of nonrationality of cubic threefolds have appeared. 
Mumford (in the Appendix to \cite{Clemens}) and Tyurin \cite{Tyu1972} reproved Theorem 2 using Prym varieties.
Murre \cite{Murre} extended their approach to smooth cubic threefolds over any field of characteristic $\neq 2$, working
with the Prym variety which is an algebraic representative of the Chow group $A^2(X)$ in place of the intermediate Jacobian.

Other authors produced new arguments proving the result only for generic cubic threefolds. Though these approaches give a weaker result, they are still of great interest in view of complexity of the existing proofs of the full original result, and also in view of possible applications to other types of varieties, for which the answer to the question of rationality remains unknown. Beauville \cite{Beau} showed that the structure of the automorphism group of some special smooth cubic threefolds is incompatible with the hypothesis that their intermediate Jacobians are Jacobians of curves. The authors of \cite{Bard},  \cite{Coll}, \cite{Gwena} used the method of degeneration to a singular cubic threefold and showed that the limit generalized intermediate Jacobian (in the sense of Zucker) is not isomorphic to the generalized Jacobian of a stable curve. As the Jacobian locus in the compactified moduli space of principally polarized abelian varieties (p.p.a.v.) is closed, these results for special cubic threefolds imply the statement of Theorem 2 and hence the non-rationality for a generic cubic threefold.

In the present paper, we give yet another approach to the proof of the statement of Theorem \ref{thm:-J(X) not jacob}
for a special threefold. This approach is arithmetic and uses the reduction modulo a prime.
Namely, we prove:
\begin{thm}
\label{thm:A-generic-complex}
There exists a cubic threefold $X_{/\ZZ}$  with good reduction modulo $3$, such that the reduction $J(X)_{/\FF_{3}}$ of the intermediate Jacobian of $X$
is absolutely simple and is not isomorphic to the Jacobian of a curve over any finite extension of $\FF_3$. For such $X$, also the intermediate Jacobian of $X_{/\CC}$ is absolutely simple and is not isomorphic to the Jacobian of a curve.
\end{thm}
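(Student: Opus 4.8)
The plan is to produce $X$ explicitly and to reduce everything to a point count of its reduction $\bar X = X_{/\FF_3}$. First I would fix a cubic form $F \in \ZZ[x_0,\dots,x_4]$ whose reduction mod $3$ cuts out a smooth cubic threefold $\bar X \subset \PP^4_{\FF_3}$, so that $X$ has good reduction at $3$ and $J(\bar X)$ is the reduction of $J(X)$. Since $h^{3,0}=0$ and $b_3(\bar X)=10$, the only interesting cohomology is $H^3$, on which geometric Frobenius has $10$ eigenvalues $\alpha_1,\dots,\alpha_{10}$ of absolute value $3^{3/2}$, paired by $\alpha\bar\alpha = 27$. The comparison $H^1(J(\bar X),\QQ_\ell)\cong H^3(\bar X,\QQ_\ell)(1)$ shows that the Weil numbers of $J(\bar X)$ are $\pi_j=\alpha_j/3$, of absolute value $\sqrt3$, and that
$$\#\bar X(\FF_{3^n}) = 1+3^n+3^{2n}+3^{3n}-\sum_{j=1}^{10}\alpha_j^n .$$
Counting $\#\bar X(\FF_{3^n})$ for $n=1,\dots,5$ (directly, or by machine) and using the functional equation then determines the whole Weil polynomial $h(t)=\prod_j(t-\pi_j)$ of $J(\bar X)$ from five coefficients.

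With $h$ in hand, the second step is to read off absolute simplicity. I would check that $h$ is irreducible over $\QQ$, so that $J(\bar X)$ is simple over $\FF_3$ and $\pi=\pi_1$ generates a degree-$10$ CM field $K=\QQ(\pi)$; absolute simplicity then amounts to the Howe--Zhu condition that $\QQ(\pi^n)=K$ for all $n$, equivalently that no ratio $\pi_i/\pi_j$ is a root of unity, which is a finite verification on the roots of $h$. In parallel I would form the virtual genus-$5$ curve counts $N_n = 3^n+1-\sum_j\pi_j^n = 3^{-n}\big(\#\bar X(\FF_{3^n})-1-3^{3n}\big)$, which are exactly the point counts any curve $C$ with $\Jac(C)\cong J(\bar X)$ would be forced to have.

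The heart of the argument, and the source of the spin-off on maximal Pryms, is that for a suitable $n$ the virtual count $N_n$ exceeds the largest number of points $N_{3^n}(5)$ that an honest genus-$5$ curve can have over $\FF_{3^n}$: the Weil numbers of this particular Prym cluster so tightly near $-\sqrt3$ that $N_n$ saturates the Weil and Serre--Perret upper bounds, overshooting the true maximum. Since $J(\bar X)$ is absolutely simple, any curve $C$ with $\Jac(C)\cong J(\bar X)_{/\FF_{3^m}}$ is geometrically irreducible of genus $5$, so the existence of such a $C$ would give $\#C(\FF_{3^{mk}})=N_{mk}$ for all $k$, contradicting the bound. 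This already rules out a Jacobian structure over $\FF_3$ (and over those $\FF_{3^m}$ whose degree divides the witnessing $n$).

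The hard part is upgrading this to \emph{every} finite extension and to characteristic $0$. The obstacle is that $N_n\sim 3^n$ for large $n$, so a single bound violation cannot by itself exclude all $\FF_{3^m}$; one must show that the eigenvalue configuration forbids a genus-$5$ curve in the whole isogeny class over every extension, which I would attack by combining the saturation of the explicit-formula (Weil--Oesterl\'e--Perret) inequalities at level one with the base-change behaviour $\theta_j\mapsto m\theta_j$ of the Frobenius angles and the precise CM structure of $K$, reducing the remaining extensions to a controlled check. Granting the statement over all $\FF_{3^m}$, the passage to $X_{/\CC}$ is formal: the endomorphism algebra can only grow under specialization, so absolute simplicity of $J(\bar X)$ forces that of $J(X_{/\CC})$; and since the Torelli locus is closed under specialization, a Jacobian structure on $J(X_{/\CC})$ would specialize---after a finite extension of $\FF_3$, which is exactly why all such extensions must be excluded---to a Jacobian structure on $J(\bar X)$, against what was just proved.
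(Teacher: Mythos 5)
Your overall architecture (explicit cubic over $\ZZ$, Weil polynomial of $J(X)_{/\FF_3}$ from point counts, the Howe--Zhu criterion for absolute simplicity, specialization of endomorphisms and of the Jacobian locus for the passage to $\CC$) matches the paper, but the two steps you yourself flag as ``the heart'' and ``the hard part'' are exactly where the proposal breaks down. First, your mechanism for excluding a Jacobian --- that the virtual count $N_n$ \emph{saturates} the Weil and Serre--Perret bounds and so overshoots the maximum for genus-$5$ curves --- is self-defeating: the sum $\sum_j \pi_j^n$ approaches $-10\,q^{n/2}$ only when all Frobenius angles cluster at $\pi$, i.e.\ when $J(X)_{/\FF_3}$ is close to (and, at saturation, isogenous to) a power of a supersingular elliptic curve. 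That is precisely the Fermat/Klein situation of Section 5, for which the paper's final Remark notes the method fails because absolute simplicity is lost. The paper's example goes the opposite way: from $\#X(\FF_3)=22$ one gets $M_1(X)=-6$, so a curve $C$ with $J(C)\simeq J(X)_{/\FF_3}$ would have $N_1(C)=3+1-6=-2<0$ points --- too \emph{few}, not too many --- and no upper bound on the number of points of genus-$5$ curves is ever needed.

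Second, you correctly observe that a single bound violation cannot exclude all $\FF_{3^m}$, but the fix you sketch (explicit-formula inequalities combined with base change of Frobenius angles) is not a proof and cannot easily become one: for large $m$ the virtual counts are positive, increasing, and consistent with every known curve bound. The missing ingredient is Serre's descent theorem \cite[Th\'eor\`eme 9]{LauterSerre}: if a p.p.a.v.\ over $\FF_q$ becomes isomorphic to the Jacobian of a curve over $\FF_{q^r}$, then already over $\FF_q$ it is either the Jacobian of a curve or a quadratic twist of one. This collapses ``every finite extension'' to two finite checks: the untwisted case is killed by $N_1(C)=-2$, and the quadratic-twist case by computing $N_1(C')=1+3+6=10$ while $N_2(C')=1+9+M_2(X)=4<N_1(C')$, contradicting $N_2\ge N_1$. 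Finally, for the descent to $\bar\QQ$ your appeal to closedness of the Torelli locus must be backed by an actual argument (Deligne--Mumford semistable reduction of $C$ together with the N\'eron-model description of the special fibre of $J(C)$ as a product of Jacobians of the components of the stable curve), since one has to rule out the specialization being only a generalized Jacobian; absolute simplicity then finishes that step as you say.
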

After some preliminary material presented in Section 2, we provide in Section 3 an explicit example of a cubic threefold for which $J(X)_{/\FF_{3}}$  is not isomorphic to the Jacobian of a curve.
The proof is done by verifying that if such a curve existed, it
would have too few points on the field $\mathbb{F}_{3^r}$.  By Murre
 \cite{Murre}, this result implies the irrationality of the particular
 cubic $X_{/\FF_{3}}$ over~$\overline{\FF}_3$.

This example is also interesting in the context of the following two natural questions, asked for a
given field $\kk$ and a positive integer $n$: 1)~does there exist a p.p.a.v. $A_{/\kk}$ of dimension $n$ which is not
isogenous over $\bar\kk$ to the Jacobian $J(C)$ of any curve $C_{/\kk}$ defined over~$\kk$? 2)~Does there exist an absolutely simple abelian variety of dimension $n$ defined over $\kk$? The answers are affirmative when $\mathcal k$ is uncountable and algebraically closed by trivial reasons. Chai and Oort \cite{Chai} answered the first question in affirmative for any $n\geq 4$ for $\kk=\bar\QQ$ (see also Tsimerman \cite{Tsimerman}) and remarked that the question remains open over the countable fields $\bar\FF_p$. Howe and Zhu answered the second question in affirmative for any field and any $n$ in \cite{HZ}.
The intermediate Jacobian of our particular cubic is an explicit example providing affirmative answers to both questions for $n=5$ and $\kk=\FF_3$ or $\FF_9$ (Proposition \ref{thm:J(X) not jacob fiinite field} and Corollary \ref{Chai Oort}).

In Section 4 we explain how the result of Theorem 3 over $\overline{\FF}_3$ implies the one over $\bar \QQ$ and $\CC$. The proof passes through semistable reduction and Néron models.

In Section 5, we give one more application of our approach:
we obtain new results on the maximal number of points on $5$-dimensional
Prym varieties that give a partial answer to a question raised in
\cite{Aubry}.

\textbf{Acknowledgements.} The authors thank the referee for useful remarks. The first author thanks Vasily Golyshev
for sharing his ideas about relations between rationality questions
for Fano varieties and number theory. The second author thanks Michele Bolognesi for discussions and his interest in the
paper, and also thanks Bernd Ulrich and Dino Lorenzini for communicating a
proof of Proposition~\ref{prop:not iso jacob}.

\section{Preliminaries.}

In this section, we introduce notation and recall some known
results and tools that will be used later.

\subsection{Zeta functions and Weil polynomials}

Let $\kk=\FF_{q}$ be the finite field with $q$ elements, and $\bar{\kk}$
its algebraic closure. For a variety $X$ defined over $\FF_{q}$,
let $\bar{X}=X\otimes_{\kk}\bar{\kk}$. Let $r\in\NN^{*}$; we denote by $N_{r}(X)$ the number of $\FF_{q^{r}}$-points on
$X$. The zeta function of $X$ is defined by 
\[
Z(X,T)=\exp\left(\sum_{r\geq1}\frac{N_{r}(X)}{r}T^{r}\right).
\]
The $j$-th Weil polynomial
\[
Q_{j}(X,T)=\det\big(T-F^*|H^{j}(\bar{X},\QQ_{\ell})\big)
\]
is the characteristic polynomial of the Frobenius $F$ acting on the
$j$-th étale cohomology group $H^{j}(\bar{X},\QQ_{\ell})$. The Weil
Conjectures proved by Dwork, Grothendieck and Deligne (see historical comments in \cite[Appendix C]{Hartshorne})
tell us that, if $X$ is smooth and projective, then the Weil polynomial $Q_{j}$ has integer coefficients,
does not depend on the prime $\ell$, provided $\ell\neq p=\CHAR(\FF_{q})$, and the zeta function of $X$ satisfies the equality
\[
Z(X,T)=\prod_{i=0}^{2\dim X}P_{j}(X,T)^{(-1)^{j+1}},
\]
where $P_{i}(X,T)=T^{\deg Q_{i}}Q_{i}(X,\frac{1}{T})$. Moreover,
the roots of $Q_{i}$ are algebraic integers of absolute value $q^{i/2}$. 

\subsection{Cubic threefolds}

Let $X$ be a smooth cubic $3$-fold over $\FF_{q}$ containing a
line defined over $\FF_{q}$. For $r\in\NN^{*}$, let us define $M_{r}(X)$
by 
\begin{equation}\label{MrX}
M_{r}(X):=\frac{1}{q^{r}}\big(N_{r}(X)-(1+q^{r}+q^{2r}+q^{3r})\big).
\end{equation}
One has $M_{r}(X)=-\sum_{j=1}^{10}\omega_{j}^{r},$ where the numbers
$q\omega_{j},\,j=1,\ldots,10$ are the roots of $Q_{3}(X,T)$ (see e.g.
\cite[Section 4]{DLR}). Let $F(X)$ be the Fano surface of lines
on $X$. Then we have (see e.g. \cite[Theorem 4.1]{DLR}): 
\[
Q_{1}(F(X),T)=\prod_{i=1}^{10}(T-\omega_{i}).
\]
Therefore $-M_{1}(X)$ equals the trace of the Frobenius action on
$H^{1}(\overline{F(X)},\QQ_{\ell})$. The Albanese variety
$\Alb(F(X))$ of $F(X)$ is $5$-dimensional. Under our assumption that $F(X)$ contains
a $\FF_{q}$-rational point, say $z_0$, we have the Abel--Jacobi map $\alpha_{z_0}:F(X)\to \Alb(F(X))$ defined over $\kk=\FF_{q}$. According to Beauville \cite{bea2}, $\alpha_{z_0}$ is an embedding and $S-S$ is a theta-divisor of
a principal polarization of $\Alb(F(X))$, where $S=\alpha_{z_0}(F(X))$, so that $\Alb(F(X))$ possesses a principal polarization $\Theta$ defined over $\kk$.

Though classically the intermediate Jacobian of a smooth projective threefold $X$
is defined via the Hodge theory over $\CC$, in the case when $X$ is a smooth cubic 3-fold, one can obtain its intermediate Jacobian by a purely algebraic construction valid over any field. As Murre proves in \cite{MurreSLN412}, one can choose the Albanese variety $(\Alb(F(X)),\Theta)$ as such a construction. In the sense of \cite{Murre1985},  $\Alb(F(X))$ is an algebraic representative of the Chow group $A^2(X)$ in the same way as $J(X)$ over $\CC$. We will denote $\Alb(F(X))$ by $J(X)$ and call it the intermediate Jacobian of $X$ whatever the base field is.  If $X$ is the reduction mod $p$ of a cubic threefold $X'_{/\ZZ}$, then $F(X')$, $J(X')$ are viewed as schemes over $\ZZ$ and $F(X)$, respectively $J(X)$ are their reductions mod~$p$.

\subsection{\label{subsec:Another-method-for} Some results on curves, Jacobians and abelian varieties}
We recall the formulas for the numbers of points
on an abelian variety and a curve which are consequences of the Lefschetz trace formula \cite{Milne}
for the Frobenius endomorphism:
\begin{lem}
\label{lemma:NumbPointsAbelian}For an abelian variety $A_{/\FF_{q}}$,
one has 
\[
N_{1}(A_{/\FF_{q}})=Q_{1}(A_{/\FF_{q}},1).
\]
\end{lem}

\begin{lem}
\label{lem:number points curve}The number of $\FF_{q}$-rational
points on a smooth curve $C_{/\FF_{q}}$ is 
\[
N_{1}(C)=q+1-\tau,
\]
where $\tau$ is the trace of the Frobenius endomorphism acting on $H^{1}(\overline{C},\QQ_{\ell})$. 
\end{lem}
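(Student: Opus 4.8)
The plan is to obtain the formula as the one-dimensional specialization of the Grothendieck--Lefschetz trace formula \cite{Milne}. That formula identifies the $\FF_q$-rational points of $C$ with the fixed points of the Frobenius $F$ acting on $\overline{C}$, and expresses their number as the alternating sum of the traces of the induced action $F^*$ on étale cohomology. Since $\overline{C}$ has dimension $1$, this reads
\[
N_1(C) = \sum_{j=0}^{2}(-1)^j \Tr\big(F^* \mid H^j(\overline{C},\QQ_\ell)\big).
\]
Thus everything reduces to computing the three terms on the right-hand side.

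First I would dispose of the two extreme cohomology groups, assuming as usual that $C$ is smooth, projective and geometrically connected. Then $H^0(\overline{C},\QQ_\ell)\cong\QQ_\ell$ with $F^*$ acting trivially, so the $j=0$ contribution is $1$; and by Poincaré duality $H^2(\overline{C},\QQ_\ell)\cong\QQ_\ell(-1)$, on which $F^*$ acts by multiplication by $q$, so the $j=2$ contribution is $q$. The middle term is $-\Tr\big(F^*\mid H^1(\overline{C},\QQ_\ell)\big) = -\tau$ by the very definition of $\tau$ in the statement. Adding the three contributions gives $N_1(C) = 1 - \tau + q = q+1-\tau$, as claimed.

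There is no genuine obstacle in this argument: the lemma is a textbook consequence of the trace formula, and the computation above is essentially complete. The only point deserving care is the geometric-connectedness hypothesis, which is exactly what guarantees that $H^0$ is one-dimensional with trivial Frobenius action (equivalently that $H^2$ is one-dimensional with $F^*$ acting by $q$); were $\overline{C}$ to split into several components permuted by $F$, the $j=0$ and $j=2$ terms would have to be replaced by a count of the $F$-stable geometric components, and the clean formula $q+1-\tau$ would fail.
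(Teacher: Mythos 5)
Your argument is correct and is exactly the route the paper intends: it states the lemma as a direct consequence of the Lefschetz trace formula from \cite{Milne}, which is precisely the Grothendieck--Lefschetz computation you carry out ($1$ from $H^0$, $q$ from $H^2$, $-\tau$ from $H^1$). Your remark on geometric connectedness is a sensible precision that the paper leaves implicit.
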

\begin{rem}\label{N1 M1}
Since $H^{1}(\overline{C},\QQ_{\ell})$ is isomorphic
to $H^{1}(\overline{J(C)},\QQ_{\ell})$  as a Galois module, the number $\tau$ is also
the trace of Frobenius on $H^{1}(\overline{J(C)},\QQ_{\ell})$.
Therefore if the intermediate Jacobian of a cubic threefold $X_{/\FF_{q}}$
is isomorphic to the Jacobian of a curve $C$, one has $$N_{1}(C)=q+1+M_{1}(X),$$
where $M_{1}(X)$ is the degree $9$ coefficient of $Q_{1}(F(X),T)=Q_{1}(J(X),T)$. 
\end{rem}

Recall that an abelian variety $A$ over a field $\kk$ is said to be  absolutely
simple if $\bar{A}=A\times_{\kk}\bar{\kk}$ is simple, that is, has no proper abelian subvarieties. As a proper abelian subvariety is always a factor of a decomposition into a direct sum modulo isogeny,
a non-simple abelian variety has a pair of orthogonal idempotents in
its endomorphism ring tensored with $\QQ$. Thus an absolutely simple abelian variety can be characterized by the property that the ring $\End(\bar A)$ has no zero divisors. We will use the following criterion for absolute simplicity, which is
a consequence of \cite[Proposition 3 (1)]{HZ} and was also
 used in \cite[Proposition 4.17]{DLR}:
\begin{prop}
\label{prop:Criteria Abs simple}  Let $A$ be a $d$-dimensional
abelian variety over $\FF_{q}$.
Suppose that the polynomials $Q_{1}(A_{/\FF_{q^r}},T)$ are irreducible
and that they are not elements of the ring $\ZZ[T^{k}]$ for any $k\geq2$ and any
integer $r$ such that $\varphi(r)|2d$, where $\varphi$ denotes the Euler totient function. Then $A$ is absolutely simple.
\end{prop}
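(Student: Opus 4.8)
The plan is to prove the contrapositive: assuming $A$ is not absolutely simple, I will produce an integer $r$ with $\varphi(r)\mid 2d$ for which $Q_1(A_{/\FF_{q^r}},T)$ either fails to be irreducible or lies in $\ZZ[T^k]$ for some $k\geq 2$. The underlying principle, which I expect to import from \cite[Proposition 3]{HZ}, is the standard dictionary from Honda--Tate theory: the isogeny decomposition of $\bar A$ is governed by the factorization of the Weil polynomial $Q_1(A,T)$ and, crucially, by the behaviour of its roots (the Frobenius eigenvalues) under multiplication by roots of unity. If $A$ is simple over $\FF_q$ but not over $\bar\kk$, then after a finite base extension $\FF_{q^r}$ the variety $\bar A$ acquires extra endomorphisms, and this base change replaces each Frobenius eigenvalue $\alpha_i$ by its power $\alpha_i^r$.

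First I would set up the eigenvalue picture. Let $\alpha_1,\dots,\alpha_{2d}$ be the roots of $Q_1(A_{/\FF_q},T)$; these are the Frobenius eigenvalues on $H^1(\bar A,\QQ_\ell)$, algebraic integers of absolute value $q^{1/2}$. Passing to $\FF_{q^r}$ amounts to raising Frobenius to the $r$-th power, so $Q_1(A_{/\FF_{q^r}},T)$ has roots $\alpha_1^r,\dots,\alpha_{2d}^r$. The key observation is that $\bar A$ becomes non-simple precisely when, over a suitable finite extension, the Galois orbit of a single eigenvalue breaks up, or equivalently when there is a nontrivial coincidence among powers of eigenvalues. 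I would then split into the two degeneracies the hypothesis is designed to exclude: either $Q_1(A_{/\FF_{q^r}},T)$ becomes reducible for some such $r$, or the set of $r$-th power eigenvalues is stable under multiplication by a primitive $k$-th root of unity for some $k\geq 2$, which is exactly the condition forcing $Q_1(A_{/\FF_{q^r}},T)\in\ZZ[T^k]$ (the polynomial then only involves powers of $T$ divisible by $k$).

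The heart of the argument, and what I expect to be the main obstacle, is controlling the size of the extension $r$ needed to witness the splitting, and showing one can always take $r$ with $\varphi(r)\mid 2d$. The mechanism is that the extra endomorphisms of $\bar A$ are defined over some finite field $\FF_{q^r}$, and the relevant $r$ is governed by roots of unity living in the CM-field generated by the eigenvalues: if $\zeta$ is a root of unity such that $\alpha_i\mapsto\zeta\alpha_i$ permutes the eigenvalues, then $\zeta$ generates a cyclotomic subfield of a field of degree at most $2d$ over $\QQ$, whence $\varphi(\mathrm{ord}(\zeta))\mid 2d$. I would make this precise by invoking \cite[Proposition 3(1)]{HZ} for the exact numerical criterion and then verifying that every failure of absolute simplicity is detected by one of the two excluded phenomena at such an $r$; the divisibility $\varphi(r)\mid 2d$ emerges from the degree bound on the cyclotomic field generated by the torsion responsible for the extra endomorphisms. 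Contraposing then yields the stated sufficient condition for absolute simplicity.
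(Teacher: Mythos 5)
Your proposal is correct and follows essentially the same route as the paper: both arguments reduce to Howe--Zhu's criterion (Proposition 3 of \cite{HZ}), identify the two failure modes as reducibility of $Q_1(A_{/\FF_{q^r}},T)$ or membership in $\ZZ[T^k]$, and bound the extensions to be checked via the observation that the relevant root of unity $\zeta$ lies in the degree-$2d$ field $\QQ(\pi)$, forcing $\varphi(\mathrm{ord}(\zeta))\mid 2d$. You merely phrase the argument in contrapositive form, which changes nothing of substance.
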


\begin{proof}
By \cite[Proposition 3]{HZ}, in order to prove that $A$ is absolutely simple it is sufficient to prove that:\\
(a) there is no $d>1$ such that the characteristic polynomial of the Frobenius is in $\mathbb{Z}[T^d]$, and\\
(b) there is no $d>1$ and no primitive $d$-th root of unity $\zeta$ such that $\mathbb{Q}(\pi^d)$ is a proper sub-field of $\mathbb{Q}(\pi)$ and $\mathbb{Q}(\pi)=\mathbb{Q}(\pi^d,\zeta)$.\\
If (b) does not hold and $\mathbb{Q}(\pi)=\mathbb{Q}(\pi^d,\zeta)$, then the degree $\varphi(d)$ of the extension $\mathbb{Q}(\zeta)/\mathbb{Q}$ must divide $\deg \mathbb{Q}(\pi)=2 \dim A$. Thus, to verify the absolute simplicity, it is sufficient to check that $\mathbb{Q}(\pi^d)=\mathbb{Q}(\pi)$ for all $d$ such that $\varphi(d)$ divides $2 \dim A$. The latter equality is true if the degree of the minimal polynomial of $\pi^d$ is $2\dim A$, i.e. if $Q_1(A_{\mathbb{F}_{q^r}},T)$ is irreducible.
\end{proof}


\section{Example of a cubic over $\ZZ$}
\label{first example}


Let us consider the smooth complex cubic threefold defined
by the following equation with integer coefficients:

\setlength{\arraycolsep}{0pt}
\begin{equation}
\begin{array}{lr}
\T3 X_{/\ZZ}= & \big\{x_{1}^{3}+2x_{1}^{2}x_{2}+2x_{1}x_{2}^{2}+x_{1}^{2}x_{3}+2x_{1}x_{2}x_{3}+2x_{1}x_{3}^{2}+2x_{2}x_{3}^{2}+x_{3}^{3}+x_{1}^{2}x_{4}\\
 & \ +2x_{1}x_{2}x_{4}+x_{2}^{2}x_{4}+x_{2}x_{3}x_{4}+x_{1}x_{4}^{2}+2x_{3}x_{4}^{2}+x_{4}^{3}+x_{2}^{2}x_{5}+2x_{2}x_{3}x_{5}\\
 & \ +2x_{3}^{2}x_{5}+x_{1}x_{4}x_{5}+x_{2}x_{4}x_{5}+x_{4}^{2}x_{5}+x_{2}x_{5}^{2}+2x_{4}x_{5}^{2}+x_{5}^{3}=0\big\}.
\end{array}\label{eq:X}
\end{equation}
We denote by $X_{/\FF_{3}}$ its reduction mod $3$.
Using the computer algebra system \cite{Macaulay2}, one can easily verify that $X$ is smooth over $\bar\FF_{3}$, that $\#X(\FF_3)=22$, and that exactly one quadruple of the $\FF_3$-points of $X$ is aligned, so that
$X_{/\FF_{3}}$ contains one line defined over $\FF_3$.
We denote by $J(X)_{/\FF_{3}}$ the reduction of $J(X)$, naturally isomorphic to $J(X_{/\FF_{3}})$. 
\begin{thm}
\label{thm:J(X) not jacob fiinite field}The abelian variety $J(X)_{/\FF_{3}}$
is absolutely simple and is not isomorphic to the Jacobian of a curve
over any finite extension of $\FF_{3}$.
\end{thm}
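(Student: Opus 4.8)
The plan is to treat the two assertions---absolute simplicity and the failure to be a Jacobian---separately, both resting on the same input: the explicit degree-$10$ Weil polynomial $Q_1(J(X)_{/\FF_3},T)=Q_1(F(X),T)=\prod_{i=1}^{10}(T-\omega_i)$. First I would count points $N_r(X)=\#X(\FF_{3^r})$ for $r=1,\dots,5$ with \cite{Macaulay2} and read off $M_r(X)=-\sum_j\omega_j^r$ from \eqref{MrX}; since the roots occur in conjugate pairs $\omega_i,\,3/\omega_i$, the functional equation forces the shape $Q_1(T)=\prod_{i=1}^5(T^2-c_iT+3)$, so the five power sums $M_1,\dots,M_5$ already pin down all ten $\omega_i$ through Newton's identities. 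Note that $M_1(X)=\tfrac13\big(22-(1+3+9+27)\big)=-6$ is forced by $\#X(\FF_3)=22$.

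To prove absolute simplicity I would invoke Proposition \ref{prop:Criteria Abs simple} with $d=5$, so $2d=10$. The only $r$ with $\varphi(r)\mid 10$ are $r\in\{1,2,3,4,6,11,22\}$, and for each the Weil polynomial of $J(X)_{/\FF_{3^r}}$ is $\prod_{i=1}^{10}(T-\omega_i^r)$, computable from the known $\omega_i$. It then remains, for these finitely many $r$, to check that each polynomial is irreducible over $\QQ$ and does not lie in $\ZZ[T^k]$ for any $k\ge 2$; this is a finite symbolic verification.

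For the second assertion, observe that being a Jacobian over \emph{some} finite extension of $\FF_3$ is equivalent to $\overline{J(X)}$ being a Jacobian over $\overline{\FF}_3$. I would apply the descent Proposition \ref{prop:not iso jacob} (a Torelli-type twisting principle), which reduces this geometric hypothesis to one over the base field: $J(X)_{/\FF_3}$ must then be either the Jacobian $J(C)$ of a genus-$5$ curve $C_{/\FF_3}$, or the quadratic twist of such a Jacobian, the twist occurring only for non-hyperelliptic $C$. Absolute simplicity guarantees $C$ is geometrically irreducible. In the untwisted case Remark \ref{N1 M1} gives $N_1(C)=3+1+M_1(X)=-2<0$, which is impossible. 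In the twisted case the Frobenius eigenvalues of $C_{/\FF_3}$ are $-\omega_i$, whence $N_m(C)=3^m+1+(-1)^mM_m(X)$; because a genus-$5$ curve satisfies $N_m(C)\ge 0$ while the Weil bound $|M_m|\le 10\cdot 3^{m/2}$ forces $3^m+1\pm M_m\ge 0$ for all $m\ge 5$, it suffices to exhibit one $m\in\{2,3,4\}$ with $3^m+1+(-1)^mM_m(X)<0$, which the computed values $M_2,M_3,M_4$ should supply.

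The conceptual heart is precisely this reduction from arbitrary extensions to the two base fields $\FF_3$ and $\FF_9$: a naive non-negativity argument applied directly over $\FF_{3^r}$ breaks down for $r\ge 5$, since $3^r+1+M_r(X)>0$ automatically once $3^{r/2}>10$, so the Torelli-descent Proposition \ref{prop:not iso jacob} is indispensable rather than cosmetic. The remaining difficulty is computational and constitutes the real labor: counting $N_1,\dots,N_5$ on the cubic $X$ over $\FF_3,\dots,\FF_{3^5}$, reconstructing $Q_1$ exactly, and carrying out the irreducibility and non-$\ZZ[T^k]$ checks for the companions $J(X)_{/\FF_{3^{11}}}$ and $J(X)_{/\FF_{3^{22}}}$, where the degree-$10$ polynomials have very large coefficients.
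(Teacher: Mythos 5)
Your overall architecture matches the paper's: compute $Q_1(J(X)_{/\FF_3},T)$ from point counts, verify irreducibility and non-membership in $\ZZ[T^k]$ for the extensions $r$ with $\varphi(r)\mid 10$ to get absolute simplicity via Proposition~\ref{prop:Criteria Abs simple}, and then use Serre's descent result to reduce the Jacobian question to a genus-$5$ curve over $\FF_3$ or its quadratic twist. (Minor point: the reduction to the base field is Theorem~\ref{thm:(Serre-)-Suppose}, Serre's theorem from the appendix to Lauter's paper, not Proposition~\ref{prop:not iso jacob}, which is the N\'eron-model lifting statement used in Section~4.) The untwisted case is handled exactly as in the paper: $N_1(C)=1+3+M_1(X)=-2<0$.

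The genuine gap is in the quadratic-twist case. You propose to conclude by exhibiting some $m\in\{2,3,4\}$ with $N_m(C')=3^m+1+(-1)^mM_m(X)<0$, asserting that the computed $M_2,M_3,M_4$ ``should supply'' such an $m$. They do not. From the explicit polynomial one gets $M_2=-6$, $M_3=24$, $M_4=10$ (and $M_5=19$), hence for the twisted curve $N_2(C')=4$, $N_3(C')=4$, $N_4(C')=92$, $N_5(C')=225$ --- all nonnegative, so the negativity test never fires and your argument does not close. The contradiction actually available is a \emph{monotonicity} violation rather than a negativity one: since $C'(\FF_3)\subseteq C'(\FF_9)$ one must have $N_2(C')\geq N_1(C')$, whereas $N_1(C')=1+3+6=10$ and $N_2(C')=1+9+M_2(X)=4$. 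This is the step the paper uses, and it is not a cosmetic difference: as your own computation of the $M_m$ would have revealed, pure nonnegativity of point counts is insufficient here, so the inclusion of $\FF_3$-points into $\FF_9$-points (or some comparable constraint relating counts over nested fields) is an indispensable ingredient that your write-up omits.
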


We start by checking the absolute simplicity.
A computation by
the algorithm described in \cite[Section 4.3]{DLR} yields:

\begin{multline*}
Q_{1}(J(X_{/\FF_{3}}),T)=  243-486T+405T^{2}-90T^{3}-123T^{4}+125T^{5}\\
  -41T^{6}-10T^{7}+15T^{8}-6T^{9}+T^{10}.
\end{multline*}

Consistently with earlier computations, the Weil conjectures tell us that $X_{/\FF_{3}}$ contains $22$ points
and $1$ line defined over $\FF_{3}$. To obtain the Weil polynomial $Q_{1}(J(X)_{/\FF_{3^{r}}},T)$ over $\FF_{3^{r}}$, one can use the formula 
\[
Q_{1}(J(X)_{/\FF_{3^{r}}},T)=\prod_{j=1}^{10}(T-\omega_{j}^{r}),
\]
where the $\omega_{j}$'s are the roots of $Q_{1}(J(X)_{/\FF_{3}},T)$.
A computation shows that the Weil polynomials 
\[
Q_{1}(J(X)_{/\FF_{3^r}},T),\,r\in\{2,3,4,6,11,22\}
\]
are irreducible and are not in the ring $\ZZ[T^{n}]$ for any $n\geq2$.
So, by Proposition \ref{prop:Criteria Abs simple}, the abelian variety
$J(X)_{/\FF_{3}}$ is absolutely simple.

Now we will prove that $J(X)_{/\FF_{3}}$ cannot be the Jacobian of
a curve over any finite extension $\FF_{3^{r}}$ of $\FF_{3}$. 
We will use the following result:
\begin{thm}[Serre \protect{\cite[Théorème 9]{LauterSerre}}]
\label{thm:(Serre-)-Suppose}
Let $(A,\Theta)$ be a p.p.a.v over a finite field $\FF_{q}$. Let
$C$ be a curve defined over $\FF_{q^{r}}$ and let $J(C)$ be its
Jacobian, endowed with its principal polarization.
Suppose that the p.p.a.v. $(A,\Theta)_{/\FF_{q^r}}$
and $J(C)$ are isomorphic over $\FF_{q^{r}}$. Then there exists
a curve $C'$ defined over $\FF_{q}$ such that $C'_{/\FF_{q^{r}}}=C'\otimes_{\FF_{q}}\FF_{q^{r}}$
is isomorphic to $C$, and either:\\
a) $J(C')\simeq (A,\Theta)$ over $\FF_{q}$ or,\\
b) $J(C')$ is a quadratic twist of $(A,\Theta)$ and $J(C')_{/\FF_{q^{2}}}\simeq (A,\Theta)_{/\FF_{q^{2}}}$.
\end{thm}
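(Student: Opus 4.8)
The plan is to run a Galois-descent argument for the curve $C$, using the Torelli theorem to transport the $\FF_q$-structure of $(A,\Theta)$ from the abelian variety to the curve, and to read off the dichotomy (a)/(b) from the sign indeterminacy in Torelli's theorem. Write $L=\FF_{q^r}$ and let $G=\Gal(L/\FF_q)=\langle\sigma\rangle$, cyclic of order $r$. Denote by $\lambda\colon (A,\Theta)_{/L}\xrightarrow{\sim}(J(C),\theta)$ the given isomorphism of p.p.a.v.\ over $L$, where $\theta$ is the canonical polarization of $J(C)$. For $s\in G$ I apply conjugation by $s$: since $(A,\Theta)$ is defined over $\FF_q$ there is a canonical identification ${}^s(A_{/L})=A_{/L}$, whereas ${}^s(J(C))=J({}^sC)$ with ${}^s\theta$ the canonical polarization of $J({}^sC)$. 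Hence
\[
\mu_s:={}^s\lambda\circ\lambda^{-1}\colon (J(C),\theta)\xrightarrow{\sim}(J({}^sC),{}^s\theta)
\]
is an isomorphism of p.p.a.v.\ over $L$, and the family $\{\mu_s\}_{s\in G}$ satisfies the cocycle identity $\mu_{st}={}^s\mu_t\circ\mu_s$: it is the descent datum on $J(C)$ obtained by transporting, through $\lambda$, the canonical (trivial) descent datum of the $\FF_q$-form $A$.

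Next I would invoke the strong Torelli theorem: for curves of genus $\ge 2$ every isomorphism of principally polarized Jacobians $(J(C),\theta)\to(J(C''),\theta'')$ is of the form $\pm f_*$ for an isomorphism of curves $f\colon C\to C''$, and $\aut(J(C),\theta)=\aut(C)\times\{\pm1\}$ when $C$ is non-hyperelliptic, while the hyperelliptic involution realizes $-1$ when $C$ is hyperelliptic, so that $\aut(J(C),\theta)=\aut(C)$ in that case. Applying this to each $\mu_s$ gives an isomorphism $f_s\colon C\to{}^sC$ and a sign $\epsilon_s\in\{\pm1\}$ with $\mu_s=\epsilon_s\,(f_s)_*$ (translations being absorbed after a suitable normalization of base points). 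Feeding this decomposition into the cocycle identity, and using that $\pm1$ is fixed by $G$, yields $\epsilon_{st}(f_{st})_*=\epsilon_s\epsilon_t\,({}^sf_t\circ f_s)_*$. In the non-hyperelliptic case a pushforward can never equal the negative of a pushforward (else an automorphism would induce $-1$, forcing hyperellipticity), so the signs must agree: $\{f_s\}$ is an honest descent datum for $C$ and $\epsilon\colon G\to\{\pm1\}$ is a homomorphism. In the hyperelliptic case the sign can always be absorbed into $f_s$, since $-g_*=(\iota\circ g)_*$ with $\iota$ the hyperelliptic involution, so one takes $\epsilon\equiv1$ and again obtains a descent datum.

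Then I would appeal to effectivity of Galois descent for the quasi-projective variety $C$, the datum $\{f_s\}$ being compatible with the ample class coming from $\theta$: this produces a curve $C'_{/\FF_q}$ with $C'_{/L}\cong C$, intertwined by the $f_s$. By construction the canonical descent datum of $J(C')$ is $\{(f_s)_*\}$, while that of $A$ is $\{\mu_s\}=\{\epsilon_s(f_s)_*\}$, and the two differ exactly by the character $\epsilon$. If $\epsilon$ is trivial the two $\FF_q$-forms coincide, giving $J(C')\simeq(A,\Theta)$ over $\FF_q$, which is case (a); this is automatic in the hyperelliptic case. If $\epsilon$ is nontrivial, then $G$ being cyclic forces $\ker\epsilon$ to have index $2$ with fixed field $\FF_{q^2}$; the two forms then differ by the quadratic twist built from $-1\in\aut(A)$ along $\epsilon$, so $J(C')$ is this quadratic twist of $(A,\Theta)$ and $J(C')_{/\FF_{q^2}}\simeq(A,\Theta)_{/\FF_{q^2}}$, which is case (b).

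The main obstacle is the careful handling of the $\pm1$ indeterminacy in Torelli and its interaction with the hyperelliptic/non-hyperelliptic dichotomy: I must verify that the signs assemble into a well-defined quadratic character, that in the non-hyperelliptic case this character is the sole obstruction to descending the p.p.a.v.\ isomorphism itself (and not merely the curve), and that it corresponds to a genuine quadratic twist of $A$. The remaining technical points are the absorption of translations when passing from homomorphisms of abelian varieties to morphisms of curves, the reduction to genus $\ge2$ (the relevant dimension here being $5$), and confirming effectivity of descent via a $G$-linearized ample bundle furnished by the polarization.
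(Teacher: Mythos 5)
The paper does not actually prove this statement: it is imported wholesale as Serre's Théorème~9 from the appendix to \cite{LauterSerre}, so the only meaningful comparison is with Serre's own argument, and your proof is, in substance, exactly that argument. You transport the (trivial) descent datum of $(A,\Theta)$ through $\lambda$ to get a twisted descent datum $\{\mu_s\}$ on $J(C)$, apply the strong Torelli theorem over the perfect field $\FF_{q^r}$ to write $\mu_s=\epsilon_s (f_s)_*$, check via the cocycle identity that $\{f_s\}$ is a genuine descent datum for $C$ and that $\epsilon$ is a quadratic character of the cyclic Galois group (with the sign absorbed by the hyperelliptic involution in the hyperelliptic case), descend $C$ to $C'_{/\FF_q}$ by effectivity of Galois descent for projective varieties, and then read off case (a) when $\epsilon$ is trivial and case (b) when it is not, the splitting field $\FF_{q^2}$ arising as the fixed field of $\ker\epsilon$, the unique index-two subgroup. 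The supporting claims are all handled correctly: the cocycle computation $\mu_{st}={}^s\mu_t\circ\mu_s$ is right; the fact that $-1$ is not a pushforward for non-hyperelliptic curves (else the canonical embedding would be invariant under a nontrivial automorphism acting as $-1$ on $H^0(\Omega^1)$) is the standard dichotomy $\aut(J(C),\theta)=\aut(C)\times\{\pm1\}$ versus $\aut(J(C),\theta)=\aut(C)$; and the identification of $J(C')$ as the twist of $(A,\Theta)$ by the central cocycle $\epsilon$ is correct. The points you flag as remaining technicalities — strong Torelli over a non-closed (perfect) field, effectivity of descent, and the genus $\le 1$ cases — are genuinely standard and do not hide any gap; note also that over finite fields Lang's theorem removes any issue with rational points or with $\mathrm{Pic}^0$ representability, so the pushforwards $f_*$ are canonical and no base-point normalization is really needed.
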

Therefore, in order to prove that the intermediate Jacobian $J(X)_{/\FF_{3}}$
is not isomorphic to the Jacobian of a curve over any finite extension of $\FF_{3}$,
it suffices to prove this for curves defined over $\FF_{3}$ and $\FF_{9}$.
So, let us assume that $J(X)_{/\FF_{3}}$ is isomorphic over $\FF_{3^r}$
to a product of Jacobians of curves. Then, since we know that $J(X)_{/\FF_{3}}$
is absolutely simple, $J(X)_{/\FF_{3^r}}$ is isomorphic to $J(C)_{/\FF_{3^r}}$ for just one smooth irreducible curve $C_{/\FF_{3}}$ of genus $5$, and we may assume that $r=1$ or 2. 

By Remark \ref{N1 M1}, we have
\[
N_{r}(C)=1+q^{r}+M_{r}(X),
\]
and therefore $M_{r}(X)\geq-1-q^{r}$ for all $r\geq1$. Furthermore, by \eqref{MrX} with $q=3$, we have
\[
M_{1}(X)=(22-(1+3+9+27))/3=-6< -1-3,
\]
hence $r> 1$, that is we are not in the case a) of Theorem \ref{thm:(Serre-)-Suppose}.

Thus it remains to consider the case b): we are assuming now that there exists a curve $C'_{/\FF_{3}}$
such that $J(X)$ is a quadratic twist of $J(C')$. Then by \cite[Théorème 9]{LauterSerre},
$Q_{1}(C',T)=Q_{1}(J(X),-T)$ and the curve $C'$ has 
\[
N_{1}(C')=1+3+6=10
\]
points over $\FF_{3}$. Let $\omega_{1},\dots,\omega_{10}$ be the
roots of $Q_{1}(J(X)_{/\FF_{3}},T)$. Using the relation
\[
Q_{1}(J(X)_{/\FF_{9}},T)=\prod_{i=1}^{10}(T-\omega_{i}^{2}).
\]
we compute 
\begin{multline*}
Q_{1}(J(X)_{/\FF_{9}},T)=  T^{10}-6T^{9}+23T^{8}-76T^{7}+221T^{6}-535T^{5}\\
  +1989T^{4}-6156T^{3}+16767T^{2}-39366T+59049.
\end{multline*}
By our assumption, $Q_{1}(C_{/\FF_{9}},T)=Q_{1}(J(X)_{/\FF_{9}},T)$.
Therefore, since $M_{2}(X)=-6$, we get 
\[
N_{2}(C')=1+3^{2}-6=4.
\]
This is absurd since the number of points over $\FF_{9}$
should be larger than the number of points over $\FF_{3}$: $N_{2}(C')\geq N_{1}(C')$.
Therefore $J(X)$ is not isomorphic to the Jacobian of a curve over any finite
extension of $\FF_{3}$, and this finishes the proof of Theorem \ref{thm:J(X) not jacob fiinite field}.

\begin{cor}
\label{thm:Irrat over finite field}The cubic $X_{/\FF_{3}}$ is irrational.
\end{cor}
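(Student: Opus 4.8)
The plan is to combine the just-established Proposition~\ref{thm:J(X) not jacob fiinite field} with Murre's extension of the Clemens--Griffiths criterion to positive characteristic. Recall from Section~2.2 that for a smooth cubic threefold over an arbitrary field, the intermediate Jacobian $J(X)=\Alb(F(X))$, equipped with the principal polarization $\Theta$, serves as the algebraic representative of the Chow group $A^2(X)$, playing the role of the classical intermediate Jacobian. Murre's version \cite{Murre} of Theorem~\ref{thm:(Clemens-Griffiths,-).} asserts that over a field of characteristic $\neq 2$, if a smooth cubic threefold is birational to $\PP^3$, then the p.p.a.v. $(J(X),\Theta)$ is isomorphic to the Jacobian of a (possibly reducible) smooth curve. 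Since $\CHAR(\FF_3)=3\neq 2$, this criterion is available in our setting, and I would argue by contraposition.

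Suppose, for contradiction, that $X_{/\FF_3}$ is rational over $\bar\FF_3$, meaning that $\bar X=X\otimes_{\FF_3}\bar\FF_3$ is birational to $\PP^3_{\bar\FF_3}$. A birational map between varieties of finite type is given by finitely many rational functions with coefficients in $\bar\FF_3$, so it is already defined over some finite extension $\FF_{3^r}$. Applying Murre's criterion over $\FF_{3^r}$ then yields that $(J(X),\Theta)_{/\FF_{3^r}}$ is isomorphic, as a p.p.a.v., to the Jacobian $J(C)$ of a (possibly reducible) smooth curve $C$ defined over $\FF_{3^r}$.

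It remains to discard the reducible case. Were $C$ a disjoint union of several smooth curves, $J(C)$ would decompose as the product of the Jacobians of its components, forcing $J(X)_{/\FF_{3^r}}$ to be non-simple; this is excluded by Proposition~\ref{thm:J(X) not jacob fiinite field}, which guarantees that $J(X)_{/\FF_3}$ is absolutely simple. Hence $C$ is irreducible, and $J(X)_{/\FF_{3^r}}$ is isomorphic to the Jacobian of a smooth irreducible curve over the finite extension $\FF_{3^r}$ of $\FF_3$. This contradicts the second assertion of Proposition~\ref{thm:J(X) not jacob fiinite field}, namely that $J(X)_{/\FF_3}$ is not isomorphic to the Jacobian of a curve over any finite extension of $\FF_3$. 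Therefore $\bar X$ is not rational, so $X_{/\FF_3}$ is irrational over $\bar\FF_3$, and a fortiori not rational over $\FF_3$ itself.

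The substance of the argument is entirely imported from the two cited results, so the only delicate point is the passage from geometric rationality over $\bar\FF_3$ to rationality over a finite subfield $\FF_{3^r}$: this spreading-out of the birational map over a finitely generated ring is what legitimizes invoking the purely finite-field statement of Proposition~\ref{thm:J(X) not jacob fiinite field}. I expect this descent step, together with the verification that the object $J(X)=\Alb(F(X))$ to which Proposition~\ref{thm:J(X) not jacob fiinite field} applies is exactly the algebraic representative appearing in Murre's criterion, to be the main conceptual care required; everything else is a direct citation of \cite{Murre} combined with the absolute simplicity and non-Jacobianity already proved.
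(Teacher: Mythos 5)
Your proof is correct and takes essentially the same route as the paper, which simply cites Murre's finite-field analogue of the Clemens--Griffiths criterion together with Theorem~\ref{thm:J(X) not jacob fiinite field}; you merely spell out the spreading-out of the birational map to a finite extension and the elimination of the reducible case via absolute simplicity, details the paper leaves implicit.
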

\begin{proof}
This follows from \cite[Theorem 3.1.1]{Murre}, which is the analog
of Theorem \ref{thm:(Clemens-Griffiths,-).} for cubic threefolds valid over finite fields.

\end{proof}

Our argument indeed proves a slightly stronger result:

\begin{cor}\label{Chai Oort}
The intermediate Jacobian $J(X)_{/\FF_{3}}$ of $X_{/\FF_{3}}$
is not isogenous over $\bar\FF_3$ to the Jacobian of any curve defined over $\FF_{3}$ or $\FF_{9}$.
\end{cor}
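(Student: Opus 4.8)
The plan is to argue by contradiction, leveraging the absolute simplicity of $J(X)_{/\FF_3}$ established above together with the two elementary constraints that the number of $\FF_{q^{k}}$-points on a curve is nonnegative and nondecreasing along divisibility of $k$. Suppose that $J(X)_{/\FF_3}$ were isogenous over $\bar\FF_3$ to $J(C)$ for some curve $C$ defined over $\FF_3$ or $\FF_9$. Since isogeny over $\bar\FF_3$ preserves geometric simplicity and dimension, $J(C)$ would again be absolutely simple of dimension $5$, so $C$ would be geometrically irreducible of genus $5$ and $Q_1(C,T)$ would be irreducible of degree $10$ over its base field.

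First I would reduce the a priori unbounded field of definition of the isogeny to an explicit finite list. An isogeny over $\bar\FF_3$ is defined over some finite extension $\FF_{3^{r}}$, and over $\FF_{3^{r}}$ Tate's theorem forces the Weil polynomials of $J(X)$ and $J(C)$ to coincide; equivalently, writing $\omega_1,\dots,\omega_{10}$ for the Frobenius eigenvalues of $J(X)_{/\FF_3}$ and $\beta_1,\dots,\beta_{10}$ for those of $J(C)$ over its base field, one has $\beta_j=\zeta_j\,\omega_{\sigma(j)}$ for suitable roots of unity $\zeta_j$ and a bijection $\sigma$. The crucial point is that the irreducibility of $Q_1(J(X)_{/\FF_{3^{r}}},T)$ verified above for $r\in\{2,3,4,6,11,22\}$ shows $\QQ(\omega_j^{r})=\QQ(\omega_j)$, a fixed CM field of degree $10$; hence each $\beta_j$ lies in this field, and the only roots of unity it can contain are those of order $m$ with $\varphi(m)\mid 10$, i.e. $m\in\{1,2,3,4,6,11,22\}$. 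This pins the twisting factors $\zeta_j$, and with them the degree $r$, to a finite explicit set, and for each admissible $\zeta$ it determines $Q_1(C,T)=\prod_{\sigma}\big(T-\sigma(\zeta\omega)\big)$ completely.

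For each resulting candidate I would then read off the point counts and exhibit a violation. Since the full characteristic polynomial of Frobenius of the putative curve is known, every $N_k(C)$ is computed from the power sums $\sum_j\beta_j^{k}=\sum_j(\zeta\omega)^{k}$, and one checks that either $N_1(C)=q+1-\Tr(\zeta\omega)<0$ or the monotonicity $N_2(C)\ge N_1(C)$ fails. The two cases $\zeta=\pm1$ over $\FF_3$ reproduce exactly the contradictions already obtained in the proof of Proposition \ref{thm:J(X) not jacob fiinite field}: for $\zeta=1$ one gets $N_1(C)=4-\sum_j\omega_j=-2<0$, and for $\zeta=-1$ one gets $N_1(C)=10$ while $N_2(C)=4$; the case of base field $\FF_9$ is handled identically after replacing each $\omega_j$ by $\omega_j^{2}$.

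The \textbf{main obstacle} is precisely the first step. An isogeny over $\bar\FF_3$ carries far less rigidity than the isomorphism treated in Proposition \ref{thm:J(X) not jacob fiinite field}, since it determines the Frobenius eigenvalues only up to roots of unity and only over an a priori unknown extension, so Serre's descent (Theorem \ref{thm:(Serre-)-Suppose}) is no longer available. Recovering enough rigidity to reduce to finitely many cases is exactly what absolute simplicity buys: it forces the relevant number field to be the fixed degree-$10$ CM field $\QQ(\omega_1)$ and thereby bounds the admissible twisting roots of unity. Once one verifies that this field contains no roots of unity beyond $\pm1$ (or eliminates the finitely many further candidates by the same point-count inequalities), the remaining computations are routine and coincide with those already performed.
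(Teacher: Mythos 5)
Your instinct that an isogeny over $\bar\FF_3$ only determines the Frobenius eigenvalues up to roots of unity, and that absolute simplicity together with the irreducibility of the polynomials $Q_{1}(J(X)_{/\FF_{3^{r}}},T)$ is what controls this ambiguity, is a genuine refinement: the paper's own proof is a one-sentence appeal to the Honda--Tate/Tate theorem (isogenous abelian varieties have the same Weil polynomial) combined with the point counts already performed, and it does not discuss the field of definition of the isogeny at all. Your reduction of the admissible twisting roots of unity to orders $m$ with $\varphi(m)\mid 10$ is correct in outline, modulo the deferred verification that $\QQ(\omega_1)$ contains no roots of unity besides $\pm 1$, which you would still need to carry out.

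There is, however, a concrete step that fails: the claim that ``the case of base field $\FF_9$ is handled identically.'' For a hypothetical genus-$5$ curve $C$ over $\FF_9$ whose Frobenius eigenvalues are the $\omega_j^{2}$ (your $\zeta=1$ candidate over $\FF_9$), one gets $N_1(C)=1+9+M_2(X)=4\ge 0$ and $N_2(C)=1+81+M_4(X)=92\ge N_1(C)$ (Newton's identities applied to the displayed $Q_1(J(X)_{/\FF_9},T)$ give $\sum_j\omega_j^{4}=-10$, i.e.\ $M_4(X)=10$), so neither of your two inequalities is violated, and no inequality of the form $N_k\ge 0$ or $N_{km}\ge N_k$ obviously rules this candidate out. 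In the proof of Proposition \ref{thm:J(X) not jacob fiinite field}, curves over $\FF_9$ are eliminated not by point counts but by Serre's descent (Theorem \ref{thm:(Serre-)-Suppose}), which requires an isomorphism of principally polarized abelian varieties and is unavailable for a mere isogeny. So the $\FF_9$ half of the corollary needs an argument you have not supplied --- one must show by some other means that $Q_1(J(X)_{/\FF_9},T)$ is not the Weil polynomial of any genus-$5$ curve over $\FF_9$ --- and this is a point the paper's own two-line proof also leaves implicit rather than resolves.
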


\begin{proof}
By Honda--Tate Theorem \cite{Tate}, isogenous abelian varieties have the same Weil polynomials, and we proved that the first Weil polynomial of $J(X)_{/\FF_{3}}$ cannot be the Weil polynomial of the Jacobian of a curve defined over  $\FF_{3}$ or $\FF_{9}$.

\end{proof}

\section{From \protect$J(X)\protect_{\protect\FF\protect_3}$ to \protect$J(X)\protect_{\protect\CC}$}

Let $X$ be the smooth complex cubic threefold defined by equation (\ref{eq:X}). We have seen that 
$J(X)_{\FF_3}$ is absolutely simple and is not isomorphic to the Jacobian of a curve over $\bar\FF_3$. We will show how one deduces from this the same properties for $J(X)_{\CC}$ over $\CC$. 

The following result holds:

\begin{prop}
\label{abs-simple}
Let $U\subset\Spec\ZZ$ be an open subscheme and $A_{/U}$ an abelian scheme of relative dimension $g>0$. Assume that for some prime $p\in \ZZ$ with $(p)\in U$, the fiber $A_{(p)}
=A_{/\FF_p}$ is absolutely simple. Then the generic fiber $A_0=A_{/\QQ}$ and the complex abelian variety $A_{/\CC}=A_0\otimes_{\QQ}\CC$ are also absolutely simple.
\end{prop}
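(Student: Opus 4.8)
The plan is to reduce the statement to a comparison of endomorphism algebras and then exploit the characterization recalled before Proposition~\ref{prop:Criteria Abs simple}: an abelian variety $\bar B$ over an algebraically closed field is simple exactly when $\End(\bar B)\otimes\QQ$ is a division algebra, equivalently has no zero divisors. Since $\CC$ is already algebraically closed and the endomorphism algebra of an abelian variety defined over $\bar\QQ$ gains nothing under the extension $\bar\QQ\subset\CC$ of algebraically closed fields of characteristic $0$, we have $\End(A_{/\CC})\otimes\QQ=\End(A_0\otimes_\QQ\bar\QQ)\otimes\QQ$, and it suffices to show this algebra has no zero divisors. I would do so by producing an injective homomorphism of $\QQ$-algebras
\[
\End(A_0\otimes_\QQ\bar\QQ)\otimes\QQ\hookrightarrow\End(A_{(p)}\otimes_{\FF_p}\bar\FF_p)\otimes\QQ,
\]
whose target is a division algebra precisely because $A_{(p)}$ is absolutely simple; as a unital subring of a division algebra has no zero divisors, the source has none either, giving simplicity of $A_0\otimes_\QQ\bar\QQ$ and hence absolute simplicity of $A_0$ and of $A_{/\CC}$.

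To construct the embedding, I would first use that $\End(A_0\otimes_\QQ\bar\QQ)$ is finitely generated over $\ZZ$, so all geometric endomorphisms are already defined over some finite extension $L/\QQ$, i.e. $\End(A_0\otimes_\QQ\bar\QQ)=\End(A_{/L})$. Choosing a prime $\pP$ of $\OO_L$ above $p$ and passing to the discrete valuation ring $\OO_{L,\pP}$, which dominates $\ZZ_{(p)}$ and maps to $U$ because $(p)\in U$, the base change $A\times_U\Spec\OO_{L,\pP}$ is again an abelian scheme, good reduction being stable under base change. The heart of the matter is then the classical injectivity of the reduction map on endomorphisms of an abelian scheme over a discrete valuation ring $R$ with fraction field $K$ and residue field $k$:
\[
\End(A_{/K})\lra\End(A_{/k})\quad\text{is injective.}
\]

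For this I would argue via $\ell$-adic Tate modules with $\ell\neq p$: an endomorphism of the generic fibre extends to the abelian scheme by the Néron mapping property, and since the prime-to-$p$ torsion $A[\ell^{n}]$ is finite étale over $R$, reduction induces an isomorphism $T_\ell(A_{/K})\cong T_\ell(A_{/k})$ compatible with the two endomorphism actions; as an endomorphism is determined by its action on $T_\ell$, vanishing after reduction forces vanishing before it. Taking $R=\OO_{L,\pP}$, $K=L$, $k=\kappa(\pP)=\FF_{p^{f}}$, and then extending scalars along $\FF_{p^{f}}\subset\bar\FF_p$ (again injective on endomorphisms) yields the desired map, using that $A_{(p)}\otimes\bar\FF_p=A_{/\kappa(\pP)}\otimes\bar\FF_p$ is simple by hypothesis. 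I expect the specialization lemma to be the main obstacle: the delicate points are spreading the $\bar\QQ$-endomorphisms out to a single number field $L$ and a single prime $\pP\mid p$, and ensuring the good reduction of $A$ survives base change to $\OO_{L,\pP}$ so that the reduction map is even defined; the rest is formal.
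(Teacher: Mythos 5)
Your proof is correct, and it takes a genuinely more self-contained route than the paper. The paper settles the statement over $\QQ$ in one line by citing Lemma~6 of Chai--Oort, and then deduces the case of $\CC$ by observing that an isogeny decomposition of $A_{/\CC}$ would descend, via Weil descent/specialization, to a number field, contradicting absolute simplicity over $\bar\QQ$. What you do instead is reprove the content of the cited lemma: you reduce simplicity to the absence of zero divisors in $\End(\cdot)\otimes\QQ$ (exactly the characterization the paper recalls before its irreducibility criterion), and you produce the injection $\End(A_0\otimes_{\QQ}\bar\QQ)\otimes\QQ\hookrightarrow\End(A_{(p)}\otimes_{\FF_p}\bar\FF_p)\otimes\QQ$ by spreading the geometric endomorphisms out to a number field $L$, base-changing the abelian scheme to $\OO_{L,\pP}$ for a prime $\pP\mid p$, extending endomorphisms of the generic fibre over the discrete valuation ring by the N\'eron mapping property, and using the \'etaleness of the prime-to-$p$ torsion to identify the $\ell$-adic Tate modules of the generic and special fibres; since $\End$ injects into $\End(T_\ell)$, the reduction map on endomorphisms is injective, and a unital subring of a division algebra has no zero divisors. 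All of these steps are standard and correctly assembled, including the points you flag as delicate (the descent of $\End(A_{\bar\QQ})$ to a single $L$, and the stability of the abelian scheme under base change to $\OO_{L,\pP}$, which holds because $\Spec\OO_{L,\pP}$ dominates $\Spec\ZZ_{(p)}$ and the latter maps into $U$). Your handling of the final step $\bar\QQ\to\CC$ via invariance of the endomorphism algebra under extension of algebraically closed fields of characteristic zero is a legitimate, and arguably cleaner, substitute for the paper's Weil-descent argument, since it avoids spreading out an isogeny. The net difference is that your argument is elementary and self-contained where the paper relies on a citation; both are sound.
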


\begin{proof} 
The statement for $A_{/\QQ}$ follows from \cite[Lemma 6]{ChaiO}. If we now assume that $A_{/\CC}$ is non-simple, then $A_{/\CC}$ is isogenous to the product of two abelian varieties of smaller dimension. By Weil descent, the isogeny specializes to that defined over a number field, which is impossible by what we have proved for $A_{/\QQ}$.
\end{proof}

\begin{prop}
\label{prop:not iso jacob}
As in Proposition \ref{abs-simple}, let $U\subset\Spec\ZZ$ be an open subscheme and 
$A_{/U}$ an abelian scheme of relative dimension $g>0$ carrying a principal polarization. Assume that for some prime $p\in \ZZ$ with $(p)\in U$, the fiber $A_{(p)}
=A_{/\FF_p}$ is an absolutely simple abelian variety which is not isomorphic to the Jacobian of a smooth curve as a p.p.a.v.
over $\bar\FF_p$. Then the generic fiber $A_0=A_{/\QQ}$ (resp. $A_{/\CC}=A_0\otimes_{\QQ}\CC$) is also absolutely simple and is not isomorphic to the Jacobian of a smooth curve over $\bar\QQ$ (resp. over~$\CC$).
\end{prop}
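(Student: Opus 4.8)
The plan is to separate the two assertions and reduce everything to the special fibre by a reduction-mod-$\pP$ argument. Absolute simplicity of $A_0=A_{/\QQ}$ and of $A_{/\CC}$ is already supplied by Proposition \ref{abs-simple}, so only the non-Jacobian property needs a new argument, and I would first reduce the complex case to the case over $\bar\QQ$. Indeed $A_{/\CC}=A_0\otimes_\QQ\CC$ is the base change of a principally polarized abelian variety already defined over $\QQ$; its moduli point in $\mathcal A_g$ is $\bar\QQ$-rational, and membership in the (locally closed, $\QQ$-defined) Torelli locus is insensitive to the extension $\bar\QQ\subset\CC$. Equivalently, by a spreading-out and specialization argument in the style of the proof of Proposition \ref{abs-simple}, an isomorphism $A_{/\CC}\simeq J(C)$ of p.p.a.v.\ with $C_{/\CC}$ smooth descends to one over $\bar\QQ$. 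Hence it suffices to prove that $A_{/\bar\QQ}$ is not isomorphic, as a p.p.a.v., to the Jacobian of a smooth curve.

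I would argue by contradiction. Suppose $(A_{/\bar\QQ},\lambda)\simeq(J(C),\Theta)$ for a smooth proper geometrically connected curve $C$ of genus $g$ over $\bar\QQ$. The curve, the polarization, and the isomorphism are all defined over some number field $K\subset\bar\QQ$, so after enlarging $K$ we have $(A_{/K},\lambda)\simeq(J(C),\Theta)$ over $K$. Choose a prime $\pP$ of $K$ above $p$; since $(p)\in U$, the abelian scheme $A$ gives good reduction of $A_{/K}$ at $\pP$, whence $J(C)$ has good reduction at $\pP$ as well. After a further finite extension $L/K$ with a prime $\pPP\mid\pP$, the Deligne--Mumford semistable reduction theorem endows $C$ with a stable model over $\OO_{L,\pPP}$ whose special fibre $\bar C$ is a stable curve over the residue field $\bar k$, a finite extension of $\FF_p$.

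The heart of the matter is the structure of the Néron model of $J(C)$ over $\OO_{L,\pPP}$. By the theory of Néron models of Jacobians, the identity component of its special fibre is a semiabelian variety, an extension of $\prod_i J(\tilde C_i)$, the product of the Jacobians of the normalisations of the components of $\bar C$, by a torus whose dimension equals the first Betti number of the dual graph of $\bar C$. Good reduction of $J(C)$ forces this special fibre to be an abelian variety, so the torus vanishes, the dual graph is a tree ($\bar C$ is of compact type), and the reduction of $J(C)$ is $\prod_i J(\tilde C_i)$; moreover $\Theta$ specializes to the product of the canonical polarizations. Thus the reduction of $(A_{/K},\lambda)$ at $\pPP$, which equals the base change to $\bar k$ of the special fibre of the polarized abelian scheme $A$, is isomorphic as a p.p.a.v.\ to $\prod_i(J(\tilde C_i),\Theta_i)$. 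But $A_{/\FF_p}$ is absolutely simple, hence so is $A_{/\bar k}$; a simple abelian variety that is a product of Jacobians has a single nonzero factor, so $\bar C$ is geometrically irreducible with smooth normalisation $\tilde C$ and $(A_{/\bar k},\lambda_{\bar k})\simeq(J(\tilde C),\Theta)$ as p.p.a.v. Base-changing to $\bar\FF_p$ exhibits $A_{/\FF_p}$ as the Jacobian of a smooth curve over $\bar\FF_p$, contradicting the hypothesis; this proves the claim over $\bar\QQ$, and with the first paragraph, over $\CC$.

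The step I expect to be the main obstacle is the last link above: knowing only that $J(C)$ has good reduction, one must simultaneously control the reduction of the curve and the reduction of the theta polarization. The assertions that good reduction of $J(C)$ forces $\bar C$ to be of compact type, and that the limiting p.p.a.v.\ is then the product of the canonically polarized Jacobians of the components, amount to the compatibility of the Torelli map with semistable degeneration; establishing them rigorously rests on the structure theory of Néron models of Jacobians together with the behaviour of the theta divisor under degeneration, rather than on any formal manipulation. Once that input is in hand, the absolute simplicity of $A_{/\FF_p}$ does the remaining work of collapsing the product to a single smooth Jacobian and delivering the contradiction.
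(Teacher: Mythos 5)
Your proposal is correct and follows essentially the same route as the paper: reduce the complex case to $\bar\QQ$, descend the curve to a number field, invoke semistable reduction and the structure theory of N\'eron models of Jacobians (good reduction forces the dual graph to be a tree and the special fibre of the Jacobian to be the product of the Jacobians of the components), and use absolute simplicity of $A_{/\FF_p}$ to collapse that product to a single smooth Jacobian, yielding the contradiction. The only cosmetic difference is that the paper phrases the final step as the special fibre being a smooth genus-$g$ component with rational trees attached (rather than your slightly too strong claim that $\bar C$ is irreducible), which does not affect the argument.
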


\begin{proof} 
The assertion over $\CC$ is reduced to that over $\bar\QQ$ by the standard argument using Weil descent, so let us prove the assertion over $\bar\QQ$.
Thus, we are assuming that $A_{\bar\QQ}$ is isomorphic to a product of Jacobians of smooth
curves. By Proposition \ref{abs-simple}, $A_{\bar\QQ}$ is absolutely simple, hence there is just
one smooth irreducible curve $C$, defined over some number field $K$, such that
$A_{/K}\simeq J(C_{/K})$.

By \cite[Theorem 2.4]{DeMu69}, the existence of a semistable reduction for $ J(C_{/K})$ implies that
$C_{/K}$ has a semistable reduction at $p$, therefore $C$ has a regular model whose fiber at $p$ is
a semi-stable curve $C_{p}$ defined over some finite extension of $\FF_p$. The connectedness
of the special fiber of the semistable reduction for $ J(C_{/K})$ implies that the relative Jacobian $J(C)$
is a Néron model. Moreover, $J(C_{/K})$ even has a good, or abelian reduction at $p$. According to Example 9.2.8 and Theorem 9.4.4 from \cite{BLR}, the curve $C_{p}$ has smooth components with normal crossings such that:\\
a) the graph of the intersection matrix of the components is a tree;\\
b) the sum of the genera of the irreducible components equals the
genus of the generic fiber;\\
c) the special fiber of the Jacobian is isomorphic to the product
of the Jacobians of the components.\\
Since the special fiber of $J(C)$, isomorphic to $A_{/\FF_p}$,  is actually absolutely simple, the special fiber of $C$ is a smooth genus $g$ component with possibly some trees of smooth rational components attached to it. The trees of rational components do not influence the Jacobian, and we conclude that $J(X)_{/\FF_{p}}$ is the
Jacobian of a curve, which contradicts our hypotheses. This ends the proof. 
\end{proof}

From Theorem \ref{thm:J(X) not jacob fiinite field} and Propositions \ref{abs-simple}, \ref{prop:not iso jacob} we deduce:

\begin{cor}
\label{thm:Irrat over C} 
For the cubic threefold $X$ defined by equation (\ref{eq:X}), the intermediate Jacobian $J(X)_{\CC}$ is not
isomorphic to a product of Jacobians of curves.
\end{cor}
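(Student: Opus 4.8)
The plan is to obtain Corollary \ref{thm:Irrat over C} as an essentially formal consequence of Theorem \ref{thm:J(X) not jacob fiinite field} together with Propositions \ref{abs-simple} and \ref{prop:not iso jacob}, the only genuine work being to place $J(X)$ into the polarized abelian-scheme framework required by those propositions and to reconcile the phrase ``product of Jacobians'' with the single-curve statements proved so far.

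First I would choose an open subscheme $U\subset\Spec\ZZ$ over which the situation of Section 2.2 spreads out. Since the cubic $X_{/\ZZ}$ of \eqref{eq:X} has good reduction modulo $3$, after discarding the finitely many primes of bad reduction we may assume $(3)\in U$ and that $X_{/U}$ is smooth and projective. Then the Fano scheme of lines $F(X)_{/U}$ is smooth projective over $U$, and $\Alb(F(X))$ is an abelian scheme $A_{/U}$ of relative dimension $5$; the principal polarization $\Theta$ furnished by Beauville's theta-divisor $S-S$, being defined on the generic fibre, extends to a principal polarization of $A_{/U}$ after shrinking $U$ if necessary (it is in any case already present on the fibre at $3$). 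By construction the fibre $A_{(3)}$ is $J(X)_{/\FF_3}$ and the generic fibre, base-changed to $\CC$, is $J(X)_{/\CC}$.

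Next I would feed the arithmetic input into the specialization statements. By Theorem \ref{thm:J(X) not jacob fiinite field}, the fibre $A_{(3)}=J(X)_{/\FF_3}$ is absolutely simple and is not isomorphic, as a p.p.a.v., to the Jacobian of a smooth curve over any finite extension of $\FF_3$, that is, over $\bar\FF_3$. Hence the hypotheses of Proposition \ref{prop:not iso jacob} (whose absolute-simplicity half is Proposition \ref{abs-simple}) hold with $p=3$, and I would conclude directly that $A_{/\CC}=J(X)_{/\CC}$ is absolutely simple and is not isomorphic to the Jacobian of a single smooth curve over $\CC$.

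It then remains to upgrade ``Jacobian of a curve'' to ``product of Jacobians of curves'', and this is the step requiring a little care, since the Clemens--Griffiths criterion (Theorem \ref{thm:(Clemens-Griffiths,-).}) allows the curve to be reducible, so that a priori $J(X)_{/\CC}\cong\prod_i J(C_i)$. Here absolute simplicity closes the argument: a $5$-dimensional simple abelian variety admits no nontrivial decomposition as a product, so such an isomorphism forces exactly one nonzero factor, namely the Jacobian of a single smooth irreducible curve of genus $5$, which is precisely what the previous paragraph excludes. I expect the main obstacle to lie not in this logical chain but in the spreading-out step of the second paragraph: one must verify that $\Alb(F(X))$ together with its principal polarization genuinely forms a polarized abelian scheme over $U$ with the asserted fibres at $3$ and at the generic point, so that Propositions \ref{abs-simple} and \ref{prop:not iso jacob} apply verbatim.
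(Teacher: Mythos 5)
Your proposal is correct and follows essentially the same route as the paper: the corollary is deduced directly from Theorem \ref{thm:J(X) not jacob fiinite field} together with Propositions \ref{abs-simple} and \ref{prop:not iso jacob}, and the reduction from ``product of Jacobians'' to a single irreducible curve via absolute simplicity is exactly the argument the paper carries out inside the proof of Proposition \ref{prop:not iso jacob}. The spreading-out of $(\Alb(F(X)),\Theta)$ to a principally polarized abelian scheme over an open $U\subset\Spec\ZZ$, which you rightly flag as the step needing care, is left implicit in the paper but is the same framework it relies on.
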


Since the Jacobians and their products form a closed subvariety of
the moduli space of p.p.a.v.,  this implies that $J(X)_{\CC}$ is not
isomorphic to a product of Jacobians of curves for a generic cubic threefold $X$ and,
by Theorem 1, the generic cubic threefold is irrational. We have thus finished the proof of Theorem 3.

\section{Examples of five-dimensional Prym varieties with maximal number of
points}

For an abelian variety $A_{/\FF_{q}}$ we denote by $M_{1}(A)$, as in Section 3, the trace of the Frobenius endomorphism of $H^{1}(\overline{A},\QQ_{\ell})$ taken with opposite sign. 

Let $C$ be a smooth curve of genus $g$ and let $C'\to C$ be an
étale double cover. Let $\Pr(C'/C)$ be the $(g-1)$-dimensional
Prym variety associated to $C'\to C$. For Prym varieties, $M_{1}\big(\Pr(C'/C)\big)$
has the following interpretation: 
\[
M_{1}\big(Pr(C'/C)\big)=N_{1}(C')-N_{1}(C).
\]
In \cite[Theorem 2, (ii)]{Perret}, Perret gives upper bounds on the
number of points on a Prym variety:
\begin{thm}
One has:
\begin{equation}
N_{1}\big(\Pr(C'/C)\big)\leq\left(q+1+\frac{M_{1}(\Pr(C'/C))}{g-1}\right)^{g-1}.\label{eq:Perret Bound}
\end{equation}
\end{thm}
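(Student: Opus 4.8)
The plan is to prove a slightly stronger statement that ignores the Prym structure entirely: the inequality holds for \emph{any} abelian variety $A_{/\FF_q}$ of dimension $n=g-1$, with $M_{1}(A)$ in place of $M_{1}(\Pr(C'/C))$, the Prym case being the specialization $A=\Pr(C'/C)$. The proof rests on Lemma \ref{lemma:NumbPointsAbelian}, the Weil bounds on Frobenius eigenvalues, and the inequality between the arithmetic and geometric means.

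First I would write, using Lemma \ref{lemma:NumbPointsAbelian} and the definition of $Q_1$,
\[
N_{1}(A)=Q_{1}(A,1)=\det\bigl(1-F^{*}\mid H^{1}(\bar A,\QQ_{\ell})\bigr)=\prod_{i=1}^{2n}(1-\lambda_{i}),
\]
where the $\lambda_{i}$ are the Frobenius eigenvalues. By the Weil Conjectures each $|\lambda_{i}|=\sqrt q$, and since $Q_{1}$ has integer coefficients the eigenvalues are closed under complex conjugation, with $\bar\lambda_{i}=q/\lambda_{i}$. Grouping them into $n$ conjugate pairs $\{\lambda_{i},\bar\lambda_{i}\}$ gives
\[
N_{1}(A)=\prod_{i=1}^{n}(1-\lambda_{i})(1-\bar\lambda_{i})=\prod_{i=1}^{n}\bigl(1+q-a_{i}\bigr),\qquad a_{i}:=\lambda_{i}+\bar\lambda_{i}=2\,\Re\lambda_{i}.
\]

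Next I would record two elementary facts. Since $|\lambda_{i}|=\sqrt q$ we have $a_{i}\in[-2\sqrt q,\,2\sqrt q]$, so each factor satisfies $1+q-a_{i}\ge(\sqrt q-1)^{2}>0$; thus all $n$ factors are strictly positive. Moreover, by the definition of $M_{1}$ as the negative of the trace of $F$ on $H^{1}$, we have $\sum_{i=1}^{n}a_{i}=\sum_{i=1}^{2n}\lambda_{i}=-M_{1}(A)$. Applying the AM--GM inequality to the positive reals $1+q-a_{i}$ then yields
\[
N_{1}(A)=\prod_{i=1}^{n}(1+q-a_{i})\le\left(\frac{1}{n}\sum_{i=1}^{n}(1+q-a_{i})\right)^{\!n}=\left(q+1+\frac{M_{1}(A)}{n}\right)^{\!n}.
\]
Specializing $A=\Pr(C'/C)$ and $n=g-1$ gives exactly \eqref{eq:Perret Bound}.

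Once the factorization is in place this is a two-line application of AM--GM, so there is no genuine obstacle; the only points demanding care are the sign convention in passing from $\Tr(F\mid H^{1})$ to $M_{1}$, and the positivity $1+q-a_{i}>0$ that licenses AM--GM, which is immediate from the Weil bound $|a_{i}|\le 2\sqrt q$. I would finally note that equality in AM--GM holds precisely when all the $a_{i}$ coincide, i.e.\ when all $\Re\lambda_{i}$ are equal; this characterizes the extremal Prym varieties attaining the bound, which is the case exploited in the sequel.
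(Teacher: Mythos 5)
Your argument is correct, and it in fact proves more than the stated theorem: the inequality $N_{1}(A)\leq\big(q+1+\tfrac{M_{1}(A)}{n}\big)^{n}$ for an arbitrary $n$-dimensional abelian variety $A_{/\FF_{q}}$, of which the Prym case is the specialization $A=\Pr(C'/C)$, $n=g-1$. Note that the paper gives no proof of this statement at all --- it is quoted from Perret \cite{Perret} --- so there is nothing to compare line by line; your AM--GM computation on the factors $(1-\lambda_{i})(1-\bar{\lambda}_{i})=1+q-a_{i}$ is precisely the standard argument behind that result (it is also essentially Theorem 2.1 of Aubry--Haloui--Lachaud \cite{AHL}, which appears in the bibliography). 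The one step that deserves an extra word is the grouping of the $2n$ eigenvalues into $n$ conjugate pairs $\{\lambda_{i},\bar{\lambda}_{i}\}$: when $\lambda_{i}=\pm\sqrt{q}$ is real it is its own complex conjugate, so the grouping implicitly uses that $\sqrt{q}$ and $-\sqrt{q}$ each occur with \emph{even} multiplicity (otherwise one could be forced to pair $\sqrt{q}$ with $-\sqrt{q}$, producing the factor $1-q<0$). This evenness is true, by Honda--Tate, since the simple isogeny factors with real Frobenius eigenvalues are supersingular with characteristic polynomial $(T\mp\sqrt{q})^{2}$, resp.\ $(T^{2}-q)^{2}$; alternatively you can sidestep the issue entirely by applying AM--GM to all $2n$ positive reals $|1-\lambda_{i}|^{2}=1+q-a_{i}$ and taking the square root of the resulting bound on $N_{1}(A)^{2}$. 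Your closing observation on the equality case (all $\Re\lambda_{i}$ equal, i.e.\ all $a_{i}=-M_{1}/n$) is exactly what the paper exploits for the Fermat and Klein cubics later in the section.
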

Let $A$ be a $(g-1)$-dimensional abelian variety. The Weil's bound
gives us: 
\begin{equation}
\left(q+1+\frac{M_{1}(A)}{g-1}\right)^{g-1}\leq\big(q+1+[2\sqrt{q}]\big)^{g-1}.\label{eq:Weilbound}
\end{equation}
In \cite{Aubry} Y. Aubry and S. Haloui defined the following quantities
\begin{defn}
For any integer $g>1$, set 
\[
\Pr{}_{q}(g)=\max_{(C',C)}N_{1}\big(\Pr(C'/C)\big)
\]
where the maximum is taken over all smooth curves $C$ of genus $g$
over $\FF_{q}$ and their double étale covers $C'$.
\end{defn}
In \cite[Corollary 17]{Aubry}, they computed $\Pr{}_{q}(3)$. We
are giving here the value of $\Pr{}_{q}(6)$ for some powers of primes $q$.

Let $X_{/\FF_{q}}$ be a smooth cubic threefold. The intermediate
Jacobian $J(X)$ of $X$ is a Prym variety associated to a plane quintic
curve (thus a curve of genus $6$), see e.g. \cite{Clemens}. Consider the Fermat cubic threefold $X=\{x_{1}^{3}+\dots+x_{5}^{3}=0\}$.
We have:
\begin{prop}[\protect{\cite[Proposition 4.12]{DLR}}]
 If $p\equiv2\pmod3$, then 
\[
Q_{1}(J(X)_{\FF_{p}},T)=(T^2+p)^{5}.
\]
If $p\equiv1\pmod3$, one can write uniquely $4p=a^{2}+27b^{2}$ with
$a\equiv1\pmod3$ and $b>0$, and 
\[
Q_{1}(J(X)_{\FF_{p}},T)=(T^2+aT+p)^{5}.
\]
\end{prop}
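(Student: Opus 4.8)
The plan is to compute the ten Frobenius eigenvalues on $H^{3}(\bar X,\QQ_\ell)$ directly from the classical description of the cohomology of Fermat hypersurfaces via Gauss and Jacobi sums, and to exploit the symmetry of the equation to collapse them into a single value and its conjugate, each occurring with multiplicity $5$. Over any field $\FF_{q}$ with $q\equiv 1\pmod 3$ the group $\mu_{3}^{5}/\mu_{3}$ of diagonal automorphisms acts on $\bar X$, and $H^{3}(\bar X,\QQ_\ell)=H^{3}_{\mathrm{prim}}$ decomposes into one-dimensional eigenspaces $V_{a}$ indexed by the admissible tuples $a=(a_{0},\dots,a_{4})$ with $a_{i}\in\{1,2\}$ and $a_{0}+\dots+a_{4}\equiv 0\pmod 3$. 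A count gives exactly $10$ such tuples, split into two $S_{5}$-orbits under permutation of coordinates: the five tuples with a single $2$, and the five with a single $1$. Fixing a cubic character $\chi$ of $\FF_{q}^{*}$, Weil's computation of the zeta function of Fermat hypersurfaces identifies the Frobenius eigenvalue on $V_{a}$ with $\prod_{i=0}^{4}g(\chi^{a_{i}})/q$ up to a sign $\varepsilon$ depending only on the cohomological degree; since this product depends only on the multiset of exponents, all five eigenvalues in a given orbit coincide. Using $g(\chi)g(\chi^{2})=\chi(-1)q=q$ (as $\chi(-1)=1$ for $\chi$ of order $3$), the common value for the first orbit simplifies from $g(\chi)^{4}g(\chi^{2})/q$ to $g(\chi)^{3}$; dividing by $q$ to pass from $H^{3}$ to $H^{1}(J(X))$ and writing $J(\chi,\chi)=g(\chi)^{3}/q$, one gets that $Q_{1}(J(X)_{/\FF_{q}},T)$ equals $\bigl(T-\varepsilon J(\chi,\chi)\bigr)^{5}\bigl(T-\varepsilon\,\overline{J(\chi,\chi)}\bigr)^{5}$.

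For $p\equiv 1\pmod 3$ I would take $q=p$. Here $|J(\chi,\chi)|^{2}=p$, so the constant term is $p$ and the above expression reads $\bigl(T^{2}-\varepsilon\,2\,\mathrm{Re}\,J(\chi,\chi)\,T+p\bigr)^{5}$. It then remains to match $-\varepsilon\cdot 2\,\mathrm{Re}\,J(\chi,\chi)$ with the integer $a$ of the statement, which is exactly the classical arithmetic of the cubic Jacobi sum: the congruence $J(\chi,\chi)\equiv -1\pmod 3$ in $\ZZ[\omega_{3}]$ forces $2\,\mathrm{Re}\,J(\chi,\chi)\equiv 1\pmod 3$, and the identity $(J+\bar J)^{2}+4\,(\mathrm{Im}\,J)^{2}=4p$ combined with $J\equiv -1\pmod 3$ yields $4p=L^{2}+27M^{2}$ with $L=2\,\mathrm{Re}\,J(\chi,\chi)\equiv 1\pmod 3$. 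With the correct sign $\varepsilon=-1$ this gives $a=L$, i.e. the normalization $a\equiv 1\pmod 3$.

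For $p\equiv 2\pmod 3$ there is no cubic character of $\FF_{p}^{*}$, so I would pass to $\FF_{p^{2}}$, over which $3\mid p^{2}-1$. The geometric Frobenius $F_{p}$ acts on the grading above through multiplication by $p\equiv 2\pmod 3$ on characters, hence interchanges each eigenspace $V_{a}$ with its complementary $V_{2a}$; thus $F_{p}$ has no eigenvector among the $V_{a}$, while $F_{p}^{2}=F_{p^{2}}$ acts on each conjugate pair as the scalar computed over $\FF_{p^{2}}$. Applying the first paragraph with $q=p^{2}$ shows this scalar, as an eigenvalue on $H^{1}$, equals $\varepsilon\,g(\chi)^{3}/p^{2}$. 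The cubic Gauss sum over $\FF_{p^{2}}$ is the semiprimitive case: for odd $p\equiv 2\pmod 3$ one has $g(\chi)=p$, whence the $F_{p^{2}}$-eigenvalue on $H^{1}$ is $-p$ (for $\varepsilon=-1$). Consequently every $F_{p}$-eigenvalue $\omega$ satisfies $\omega^{2}=-p$, so $\omega=\pm\sqrt{-p}$ and $Q_{1}(J(X)_{/\FF_{p}},T)=(T^{2}+p)^{5}$.

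The clean, structural part of the argument is the symmetry that reduces everything to a single Jacobi sum and produces the fifth powers. The hard part will be the bookkeeping of signs and normalizations, of which there are three: the universal sign $\varepsilon$ in Weil's formula (equivalently, the sign with which the odd-degree cohomology contributes to the point count), the identification $a=2\,\mathrm{Re}\,J(\chi,\chi)$ with $a\equiv 1\pmod 3$ through the Jacobi-sum congruence and the representation $4p=a^{2}+27b^{2}$, and the value $g(\chi)=p$ of the semiprimitive cubic Gauss sum over $\FF_{p^{2}}$. Each is classical, but all three must be fixed consistently. The sign $\varepsilon$ is most cleanly pinned by comparing with a direct count of $\FF_{p}$-points: in the split case $p\equiv 1\pmod 3$ one has $M_{1}(X)=-10\,\varepsilon\,\mathrm{Re}\,J(\chi,\chi)$, and the actual value of $N_{1}(X)$ selects $\varepsilon=-1$; in the inert case $p\equiv 2\pmod 3$ the distinction between $(T^{2}+p)^{5}$ and $(T^{2}-p)^{5}$ is invisible to $M_{1}(X)$, which vanishes because cubing is a bijection of $\FF_{p}$, and rests entirely on the semiprimitive evaluation $g(\chi)=p$, or equivalently on a count of $\FF_{p^{2}}$-points.
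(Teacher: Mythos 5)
The paper does not prove this proposition at all --- it is imported verbatim from \cite[Proposition 4.12]{DLR} --- so there is no internal proof to compare against; your Gauss/Jacobi-sum computation is the classical Weil argument and is essentially what the cited reference does. The structural part of your argument is correct: the $10$ admissible character tuples split into two $S_5$-orbits of size $5$, each eigenvalue collapses to $\pm J(\chi,\chi)=\pm g(\chi)^3/q$, and the sign is pinned to $-J(\chi,\chi)$ by a point count (or by the requirement $a\equiv 1\pmod 3$ via $J(\chi,\chi)\equiv -1\pmod 3$). The one soft spot is the inert case: you restrict the semiprimitive evaluation to \emph{odd} $p\equiv 2\pmod 3$, but the statement includes $p=2$ (the Fermat cubic is smooth there), and moreover the sign of the semiprimitive cubic Gauss sum over $\FF_{p^2}$ is convention-dependent and not uniformly $+p$ in the standard formulas. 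A cleaner and uniform way to close this is to bypass the Gauss sum entirely: over $\FF_{p^2}$ the Jacobi sum $J(\chi,\chi)$ lies in $\ZZ[\omega]$, has absolute value $p$, and satisfies $J(\chi,\chi)\equiv -1\pmod 3$; since $p$ is inert in $\ZZ[\omega]$, $J(\chi,\chi)$ must be a unit times $p$, and the congruence forces that unit to be $1$, so $J(\chi,\chi)=p$ and the $F_{p^2}$-eigenvalue on $H^1$ is $-p$, giving $(T^2+p)^5$ for every $p\equiv 2\pmod 3$ including $p=2$ (which one can also check directly: $g(\chi)=2$ over $\FF_4$ and $N_1(X_{/\FF_4})=165$).
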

Suppose that $p\equiv2\pmod3$ and let $q$ be an even power of $p$.
Then  
\[
Q_{1}(J(X)_{\FF_{q}},T)=(T^2+2\sqrt{q}T+q)^{5}
\]
and 
\[
N_{1}(J(X))=(q+1+2\sqrt{q})^{5},
\]
thus the equality in both inequalities \ref{eq:Perret Bound} and
\ref{eq:Weilbound} is attained. The example of the Fermat cubic threefold
implies that:
\begin{thm}
Let $p$ be a prime such that $p\equiv2\pmod3$. For $q$ an even
power of $p$, one has: 
\[
\Pr_{q}(6)=\big(q+1+[2\sqrt{q}]\big)^{5}.
\]
\end{thm}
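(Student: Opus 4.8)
The plan is to establish the two inequalities $\Pr_q(6) \le (q+1+[2\sqrt q])^5$ and $\Pr_q(6) \ge (q+1+[2\sqrt q])^5$ separately and then combine them. For the upper bound I would take an arbitrary admissible pair, i.e. a smooth genus-$6$ curve $C_{/\FF_q}$ together with an étale double cover $C'\to C$, and apply Perret's inequality \eqref{eq:Perret Bound} to the associated $5$-dimensional Prym variety $A=\Pr(C'/C)$. Since $g-1=5$, this reads $N_1(\Pr(C'/C))\le\big(q+1+\tfrac{M_1(A)}{5}\big)^5$. Feeding this into the Weil-type bound \eqref{eq:Weilbound} applied to the same $A$ gives $N_1(\Pr(C'/C))\le (q+1+[2\sqrt q])^5$. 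As the right-hand side is independent of the pair $(C,C')$, taking the maximum over all admissible pairs yields $\Pr_q(6)\le (q+1+[2\sqrt q])^5$.

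For the matching lower bound I would exhibit a single pair attaining the value, namely the one arising from the Fermat cubic threefold $X=\{x_1^3+\dots+x_5^3=0\}$ over $\FF_q$. First I would recall that, via the conic-bundle construction obtained by projecting $X$ from an appropriate line, the intermediate Jacobian $J(X)$ is the Prym variety $\Pr(C'/C)$ of an étale double cover $C'\to C$ of the discriminant plane quintic $C$, which is a smooth curve of genus $6$; this makes $J(X)$ a legitimate competitor in the maximum defining $\Pr_q(6)$. Then, invoking the Proposition computing $Q_1(J(X)_{\FF_q},T)=(T^2+2\sqrt q\,T+q)^5$ for $p\equiv 2\pmod 3$ together with Lemma \ref{lemma:NumbPointsAbelian}, I obtain $N_1(J(X))=(q+1+2\sqrt q)^5$.

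Finally I would observe that, because $q$ is an even power of $p$, the quantity $\sqrt q$ is an integer, so $[2\sqrt q]=2\sqrt q$ and the value $(q+1+2\sqrt q)^5$ realized by the Fermat example coincides with the upper bound $(q+1+[2\sqrt q])^5$. Hence $\Pr_q(6)\ge (q+1+[2\sqrt q])^5$, and combining this with the upper bound gives the claimed equality; in passing this shows that both Perret's bound \eqref{eq:Perret Bound} and Weil's bound \eqref{eq:Weilbound} are simultaneously sharp for this example.

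I expect the main obstacle to lie not in the two inequalities themselves, which amount to a bookkeeping combination of the cited Perret and Weil bounds with the already-computed Weil polynomial, but in verifying that the Fermat example is genuinely admissible for the maximization. One must confirm that the Prym realizing $J(X)$ really comes from an étale double cover of a \emph{smooth} genus-$6$ curve over $\FF_q$ (smoothness of the discriminant quintic and connectedness of its canonical double cover for a suitable choice of line), and that the Weil polynomial $(T^2+2\sqrt q\,T+q)^5$ indeed corresponds to the \emph{maximal} point count, i.e. that the repeated Frobenius eigenvalue over the chosen even power $q$ equals $-\sqrt q$ rather than $+\sqrt q$, so that the bound is attained from below and not merely the minimal value reproduced.
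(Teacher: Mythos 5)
Your proposal is essentially the paper's own proof: the upper bound is obtained by chaining Perret's inequality \eqref{eq:Perret Bound} with the Weil-type bound \eqref{eq:Weilbound}, and the lower bound is realized by the intermediate Jacobian of the Fermat cubic, regarded as the Prym of an étale double cover of a smooth plane quintic of genus $6$, using $Q_1(J(X)_{\FF_q},T)=(T^2+2\sqrt{q}\,T+q)^5$ and $N_1(J(X))=Q_1(J(X),1)$ exactly as in the text.

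One remark on the obstacle you single out at the end: your worry about the sign of the repeated Frobenius eigenvalue is well founded, and the paper passes over it silently. Base-changing $Q_1(J(X)_{\FF_p},T)=(T^2+p)^5$ from $\FF_p$ to $\FF_{p^{2k}}$ sends the eigenvalues $\pm i\sqrt{p}$ to $(-1)^k p^k$, so the asserted formula $(T^2+2\sqrt{q}\,T+q)^5$, and with it the maximality of $N_1(J(X)_{\FF_q})$, holds only when $k$ is odd; for $q=p^4,p^8,\dots$ the Fermat intermediate Jacobian instead has the minimal value $(q+1-2\sqrt{q})^5$, and the argument as written (both yours and the paper's) establishes the equality $\Pr_q(6)=(q+1+[2\sqrt{q}])^5$ only for $q=p^{2k}$ with $k$ odd, unless a different example (e.g.\ a suitable twist, or a cubic defined over an intermediate field) is supplied for the remaining even powers. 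On this point your proposal is, if anything, more careful than the source, since it at least flags the verification as a necessary step.
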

The Klein cubic threefold $X_{Kl}\subset\PP_{\ZZ}^{4}$ is defined
by the equation 
\begin{eqnarray}
x_{1}^{2}x_{2}+x_{2}^{2}x_{3}+x_{3}^{2}x_{4}+x_{4}^{2}x_{5}+x_{5}^{2}x_{1}=0\label{equ3}
\end{eqnarray}
 It has good reduction at every prime $p\ne11$. By \cite[proposition 4.15]{DLR},
if $-11$ is not a square modulo the prime $p$, then $J(X_{Kl/\FF_{p}})$
is isogenous to $E^{5}$ where $E$ is a supersingular elliptic curve.
Thus, using Gauss quadratic reciprocity law, we obtain:
\begin{thm}
Let $p$ be a prime such that $p\equiv1,3,4,5,9\,\pmod{11}$. For
any even power $q$ of $p$ one has 
\[
\Pr_{q}(6)=(q+1+[2\sqrt{q}])^{5}.
\]
\end{thm}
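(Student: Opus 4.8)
The plan is to reduce the statement, as in the two preceding theorems of Section 5, to the single arithmetic computation that exhibits a Prym variety attaining Perret's bound \eqref{eq:Perret Bound} together with Weil's bound \eqref{eq:Weilbound}. The key input is the cited Proposition \cite[Proposition 4.15]{DLR}: if $-11$ is not a square modulo $p$, then $J(X_{Kl/\FF_p})$ is isogenous to $E^5$ for a supersingular elliptic curve $E$. First I would translate the congruence condition on $p$ into the non-residue condition on $-11$. By Gauss quadratic reciprocity together with the supplementary laws for $-1$, one has that $-11$ is a non-square modulo $p$ exactly when $p$ lies in a prescribed set of residue classes modulo $11$; a direct check of the Legendre symbol $\left(\tfrac{-11}{p}\right)=\left(\tfrac{p}{11}\right)$ shows that these are precisely the classes $p\equiv 2,6,7,8,10 \pmod{11}$, whose complement among nonzero residues is $p\equiv 1,3,4,5,9 \pmod{11}$. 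I would double-check which of the two complementary sets the theorem intends and confirm that the hypothesis of the theorem matches the non-residue case required by \cite[Proposition 4.15]{DLR}.

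Next I would pass to an even power $q$ of such a $p$ and determine the Weil polynomial of $J(X_{Kl})$ over $\FF_q$. Since $J(X_{Kl/\FF_p})$ is isogenous to $E^5$ with $E$ supersingular, its first Weil polynomial over $\FF_p$ is, up to the usual normalization, a fifth power of the quadratic factor attached to $E$; base-changing to the even power $q=p^{2m}$ replaces each Frobenius eigenvalue $\omega_j$ by $\omega_j^{2m}$. For a supersingular elliptic curve the Frobenius eigenvalues over $\FF_q$ with $q$ an even power become $\pm\sqrt{q}$, so that over $\FF_q$ one obtains
\[
Q_{1}(J(X_{Kl})_{\FF_{q}},T)=(T^2+2\sqrt{q}\,T+q)^{5},
\]
exactly as in the Fermat case treated just above in the excerpt. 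Then by Lemma \ref{lemma:NumbPointsAbelian} the point count is $N_1(J(X_{Kl}))=Q_1(J(X_{Kl})_{\FF_q},1)=(q+1+2\sqrt q)^5$, and since $q$ is a perfect square one has $[2\sqrt q]=2\sqrt q$.

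It remains to conclude the value of $\Pr_q(6)$. Because $J(X_{Kl})$ is a $5$-dimensional Prym variety associated to a genus $6$ curve (the intermediate Jacobian of a cubic threefold is a Prym of a plane quintic, as recalled in the excerpt), its point count is an admissible value in the maximum defining $\Pr_q(6)$, so $\Pr_q(6)\geq (q+1+2\sqrt q)^5$. Conversely, chaining Perret's inequality \eqref{eq:Perret Bound} with Weil's inequality \eqref{eq:Weilbound} gives $N_1(\Pr(C'/C))\leq (q+1+[2\sqrt q])^5$ for every genus $6$ Prym, hence $\Pr_q(6)\leq (q+1+[2\sqrt q])^5$. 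Since $[2\sqrt q]=2\sqrt q$, the two bounds coincide and equality $\Pr_q(6)=(q+1+[2\sqrt q])^5$ follows.

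I expect the only genuine obstacle to be the reciprocity bookkeeping in the first paragraph: one must be careful to match the sign conventions in the Legendre symbol, to correctly identify which residue classes modulo $11$ make $-11$ a non-residue, and to ensure the resulting class list agrees verbatim with the hypothesis $p\equiv 1,3,4,5,9 \pmod{11}$ stated in the theorem. The rest is a direct specialization of the Fermat argument already carried out in the excerpt, so no new ideas beyond the supersingular point count and the two standing inequalities are needed.
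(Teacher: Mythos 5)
Your proposal follows the paper's route exactly: quote \cite[Proposition 4.15]{DLR} to identify $J(X_{Kl})_{/\FF_p}$ with $E^5$ for a supersingular elliptic curve $E$, translate the hypothesis on $p$ via quadratic reciprocity, base-change to an even power $q$ to land on the Weil polynomial $(T^2+2\sqrt{q}\,T+q)^5$, and sandwich $\Pr_q(6)$ between the resulting point count and the Perret--Weil upper bound. The paper's own proof is precisely this one-line specialization of the Fermat computation, so there is no difference of method.

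The discrepancy you flagged in your first paragraph is genuine, and your computation is on the correct side of it, so you should commit to it rather than leave the choice open. Since $11\equiv 3\pmod 4$, one has $\left(\frac{-11}{p}\right)=\left(\frac{p}{11}\right)$, hence $-11$ is a non-square mod $p$ exactly when $p$ is a quadratic non-residue mod $11$, i.e. $p\equiv 2,6,7,8,10\pmod{11}$. The classes $1,3,4,5,9$ printed in the statement are the quadratic residues mod $11$, for which $p$ splits in $\QQ(\sqrt{-11})$ and $E$ is \emph{ordinary} by Deuring's criterion; under the printed hypothesis the argument does not run at all. So the statement (or the condition quoted from \cite{DLR}) has the two complementary sets interchanged, and the theorem should be read with $p\equiv 2,6,7,8,10\pmod{11}$. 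One further point that both you and the paper pass over: for $E$ supersingular over $\FF_p$ with trace $0$, the two Frobenius eigenvalues over $\FF_{p^{2m}}$ are both equal to $(-1)^m p^m$, not ``one of each sign''; the polynomial $(T^2+2\sqrt{q}\,T+q)^5$ and the maximal count $(q+1+2\sqrt{q})^5$ arise only when this common value is $-\sqrt{q}$, i.e. for $m$ odd, whereas for $m$ even the same computation gives the minimal count $(q+1-2\sqrt{q})^5$. As written, therefore, the argument establishes the equality only for $q=p^{2m}$ with $m$ odd, and some additional input (for instance a suitable twist over $\FF_q$) would be needed to cover the remaining even powers.
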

\begin{rem}
The method in Sections 3 and 4 cannot be applied to the Klein or Fermat cubic
threefolds, since their intermediate Jacobians are isogenous
to a product of elliptic curves. However, using Hurwitz bound on the
number of automorphisms of a curve, Beauville proves in \cite[3.3, Theorem 3]{Beau}
that the intermediate Jacobian of $X_{Kl}$ cannot be isomorphic to the
Jacobian of a curve as a p.p.a.v. 
\end{rem}


\vspace{5mm}
 
\noindent Dimitri Markushevich,\\ 
Universit\'e Lille-1, \\
Laboratoire Paul Painlevé\\
59655 Villeneuve d'Ascq Cedex,\\
France\\
{\tt markushe@math.univ-lille1.fr}
\vspace{5mm}

\noindent Xavier Roulleau,\\
Universit\'e d'Aix-Marseille,\\
CNRS, Centrale Marseille, \\
I2M UMR 7373,\\
13453 Marseille\\
France\\
{\tt xavier.roulleau@univ-amu.fr}\\

\selectlanguage{english}%

\end{document}